\documentclass[twocolumn]{autart}
\usepackage{graphicx}
\usepackage{subfigure}
\usepackage{mathtools}
\usepackage{amsmath}
\usepackage{mathrsfs}
\usepackage{amssymb}
\usepackage{bm}
\usepackage{float}
\usepackage{hyperref}
\usepackage{wrapfig}
\usepackage{graphicx}
\usepackage{algorithm}

\usepackage{algorithmic}

\usepackage{picins}
\usepackage{color}
\setlength{\parindent}{2em}
\setlength{\parskip}{0.1em}

\newcommand{\pr}{\hbox{\sf P}}

\newcommand{\ep}{\hbox{\sf E}}

\newtheorem{asm}{Assumption}
\newtheorem{construction}{Construction}

\newtheorem{remark}{Remark}
\newtheorem{lemma}{Lemma}
\newenvironment{proof}{{\noindent \textbf{Proof}}}{\hfill $\square$\par}
\allowdisplaybreaks[4]
\begin{document}

\begin{frontmatter}

\title{A hybrid deep learning method for finite-horizon mean-field game problems}\vspace{-3em}

\thanks[footnoteinfo]{Corresponding author Jiaqin~Wei. Yu Zhang and Jiaqin Wei are co-first authors.}

\author[*]{Yu Zhang}\ead{52194404014@stu.ecnu.edu.cn},
\author[Z. Jin]{Zhuo Jin}\ead{zhuo.jin@mq.edu.au},
\author[*]{Jiaqin Wei}\ead{jqwei@stat.ecnu.edu.cn},
\author[G. Yin]{George Yin}\ead{gyin@uconn.edu}
\address[*]{Key Laboratory of Advanced Theory and Application in Statistics and
Data Science-MOE, School of Statistics, East China Normal University, Shanghai 200062, China}\vspace{-0.5em}
\address[Z. Jin]{Department of Actuarial Studies and Business Analytics, Macquarie University, 2109, NSW, Australia}\vspace{-0.5em}
\address[G. Yin]{Department of Mathematics, University of Connecticut, Storrs, CT 06269-1009, USA}\vspace{-1.5em}

\begin{keyword}                           
Neural network; Deep learning; Markov chain approximation; Stochastic approximation; Mean-field games.              
\end{keyword}                             

\begin{abstract}                          
This paper develops a new deep learning algorithm to solve a class of finite-horizon mean-field games. The proposed hybrid algorithm uses Markov chain approximation method combined with a stochastic approximation-based iterative deep learning algorithm. Under the framework of finite-horizon mean-field games, the induced measure and Monte-Carlo algorithm are adopted to establish the iterative mean-field interaction in Markov chain approximation method and deep learning, respectively. The Markov chain approximation method plays a key role in constructing the iterative algorithm and estimating an initial value of a neural network, whereas stochastic approximation is used to find accurate parameters in a bounded region. The convergence of the hybrid algorithm is proved; two numerical examples are provided to illustrate the results.\vspace{-1.5em}
\end{abstract}

\end{frontmatter}

\section{Introduction}
The study of mean-field games (MFGs) is initiated independently in
the pioneering work of Lasry and Lions \cite{LjLp:06i,LjLp:06ii,LjLp:07,LjmLpl:07}
and Huang, Caines, and Malham\'e \cite{HmMrCp:06,HmCpMrp:07,HmCpeMr:07,HmCpMr:07}.
MFGs are multi-player games with weak interactions among players, which can be described as systems of two highly coupled nonlinear partial differential equations (PDEs); see \cite{BaFjYp:13,BaFjYs:15} and the references therein. The first equation, is the
Hamilton-Jacobi-Bellman (HJB) equation satisfied by the value function, whereas the second one represents the distribution given by a
Kolmogorov-Planck equation. Recently, the probabilistic methods, known as forward-backward stochastic differential equations, are also used to represent solutions of MFGs; see, e.g., \cite{CrDf:13} and \cite{CrLd:15}.

Usually, the solutions for MFGs cannot be solved in closed form.
As a result, numerical approximations are needed. In most of the existing
literature on numerical schemes for MFGs, often PDE-based approaches are used.
In \cite{AyCi:10}, the authors suggest a finite difference method for approximating the PDE system first, and they prove the convergence of the scheme in \cite{AyCfCi:13}. Lachapelle \textit{et al}. \cite{LaSjTg:10} construct computational procedures by utilizing the linear quadratic
nature, to approximate the value function, control, and
measure through the application of finite differences in the forward-backward
system. The study of Semi-Lagrangian schemes is conducted by Carlini
and Silva in \cite{CeSf:14} and \cite{CeAf:14}.
The Markov chain approximation method (MCAM) is used to construct approximations
for solutions of the MFGs with reflecting barriers in \cite{BeBaCa:18}.
In \cite{CjCdDf:19}, the authors propose an algorithm based on Picard iterations and the continuation method with the use of the master equation together with
smoothness assumptions on the infinite dimensional PDEs.

Although there have been numerous advancements,
developing numerical methods for solving MFGs, especially in high-dimensional problems has fallen behind. As is well known, the mesh-based
methods are prone to the curse of dimensionality, that is, their computational
complexity grows exponentially with spatial dimension. On the other
hand, the availability of powerful software packages like TensorFlow
and PyTorch in the public domain has enabled low-cost testing of the potential impact of machine learning tools for solving challenging
problems for MFGs. In \cite{GxHaXr:19}, a general framework of MFGs
is presented for simultaneous learning and
decision making in stochastic games with a large population.
Ruthotto \textit{et al}. \cite{RlOs:20} provide a machine learning framework for solving high-dimensional
MFGs by combining Lagrangian and Eulerian viewpoints and leveraging recent advances.
In \cite{PlGm:20}, the authors provide a rigorous mathematical formulation of deep learning methodologies through an in-depth analysis of the learning procedures characterizing Neural Network models within the theoretical frameworks of stochastic optimal control and MFGs. Luo \textit{et al}. \cite{LjZh:22} apply the deep neural network approach to solving the finite state MFGs. Deep learning applications of MFGs and mean-field control (MFC) in finance
are described in \cite{CrLm:21}, and the reinforcement learning
for MFGs with a special focus on a two-time-scale procedure are considered
in \cite{AaFjLm:21}. Guo \textit{et al}. \cite{GxHaXr:22} extend reinforcement learning work to more general MFGs settings. Two scalable algorithms are used to compute Nash equilibria in various finite-horizon MFGs in \cite{LmPs:22}.

In this paper, we develop a hybrid algorithm to solve
finite-horizon MFGs. The proposed algorithm is based on
MCAM and stochastic approximation (SA) algorithm;
see, e.g., \cite{CxJzYh:20} and \cite{JzYhYg:21} for
the convergence and applications of the hybrid algorithm applied in
an infinite-horizon optimization problem, respectively. Specifically,
we apply MCAM to establish the framework
of the algorithm and identify a neighborhood of optimal controls, which
is critical to fit the initial values of neural network's parameters.
Meanwhile, SA algorithm is used to calibrate
the parameters of the neural network. As for the introduction and
in-depth development of MCAM and stochastic
approximation algorithm, the relevant references can be found in \cite{KhDp:01}
and \cite{KhYg:03}, respectively. Besides, the induced measure
is adopted to obtain the convergence of an iterative mean-field interaction.
Finally, the convergence of the algorithm is obtained and two numerical
examples are provided to illustrate the results.

The main contributions of this paper are as follows. 
A hybrid algorithm is developed
in our work.
Although it bears some 
resemblance to that used in \cite{JzYhYg:21},
in contrast to the infinite horizon optimization problem, 
 our focus here is devoted 
to the finite-horizon optimization problem.
When it comes to the model, the study of MFGs as a Nash equilibrium seeking problem involves a mean-filed interaction, because of which we need to construct iterative procedure in our algorithm. By hybrid we mean the combination of MCAM and SA algorithm. The MCAM is helpful when constructing an iterative algorithm and approximating initial parametrization of neural network from the outset. It can also determine a bounded region within which the SA algorithm is then used to find more accurate parameters. We utilize the induced measure (see, \cite{BeBaCa:18}) in MCAM and Monte-Carlo algorithm in deep learning to construct the iterative procedure of the mean-field interaction.
Dealing with the iteration of the mean-field interaction, we pass the average distributions to the next iteration, where the average distributions means that we assign different weight values to the previous iteration and the current iteration. In addition, our proposed algorithm can enhance the performance of optimization problems in high-dimensional stochastic environments by improving computational efficiency and accuracy. 
Addressing optimization problems involving multiple control variables and states, one inevitably meets the challenge of the ``curse of dimensionality'', wherein the number of computation nodes increases exponentially. The advancements in deep learning allow us to substitute the optimization over piecewise control grids for every state value with the stochastic approximation approach utilizing parametric neural networks across all state values. Now the computational complexity primarily arises from the evaluation of gradients for every state value, turning into a linear relationship with the number of points in the state lattice. In this way, the computation efficiency can be improved. On the other hand, the piecewise constant control is used to approximate the optimal control in traditional methods, and the accuracy of the resulting control strategy is contingent on the density of the control grid. In contrast, the utilization of neural networks in the control strategy enables continuous values for control variables, effectively tackling the challenge of selecting an appropriate precision in control spaces with significant differences in scales. Hence the accuracy of numerical results can be improved as well.

The remainder of this paper is organized as follows. Section \ref{Model} introduces
the framework of MFGs. The main steps of
the hybrid algorithm is presented in Section \ref{Numerical algorithm}. In Section \ref{Convergence of algorithm}, we establish the convergence of the algorithm. Section \ref{Numerical examples} gives two numerical examples for illustration. Finally, Section \ref{Conclusion} concludes the paper with further remarks.

\section{Model}\label{Model}
In this section, we give a brief introduction to the relevant MFG models.
For a fixed finite time horizon $T$, let $\{ \boldsymbol{W}_{t}\}_{0\leq t\leq T}$ be a $d$-dimensional Brownian motion defined on a complete filtered probability space $(\Omega,\mathcal{F},\mathbb{F}=\{ \mathcal{F}_{t}\}_{0\leq t\leq T},\pr)$. Let $\mathcal{Q}\subseteq\mathbb{R}^{d}$ be the space of the state process, $\boldsymbol{U}\subseteq\mathbb{R}^{k}$, and $\mathcal{P}^{p}(\mathcal{Q})$ be the space of probability measures on $\mathcal{Q}$ with finite $p$-th moment, i.e., $m\in\mathcal{P}^{p}(\mathcal{Q})$ if
$\int_{\mathcal{Q}}\left\Vert \boldsymbol{x}\right\Vert ^{p}dm(\boldsymbol{x})<\infty.$
The evolution of state process $\boldsymbol{X}_{s}$ follows
\begin{equation}
\setlength{\abovedisplayskip}{3pt}
\setlength{\belowdisplayskip}{3pt}
\left\{ \begin{array}{l}
d\boldsymbol{X}_s=\boldsymbol{b}(s,\boldsymbol{X}_{s},m_{s},\boldsymbol{\alpha}_{s})ds+\boldsymbol{\varSigma}(s, \boldsymbol{X}_{s}) d\boldsymbol{W}_s, \\
\boldsymbol{X}_t=\boldsymbol{x},
\end{array}\right.\label{eq:2-1-2}
\end{equation}
where $m_{s}$ is the distribution of state $\boldsymbol{X}_{s}$ for given $s\in[t, T]$, and $\boldsymbol{\alpha}_{s}\in\boldsymbol{U}$ is the control. Here, the drift
$\boldsymbol{b}:[t, T] \times \mathcal{Q} \times \mathcal{P}^2\left(\mathcal{Q}\right) \times \boldsymbol{U} \rightarrow \mathbb{R}^d$, and the volatility $\boldsymbol{\varSigma}: [t, T] \times\mathcal{Q} \rightarrow \mathbb{R}^{d\times d}$ are suitable functions.

Denote by $\mathcal{M}\left([t,T];\mathcal{P}^{2}(\mathcal{Q})\right)$ the set of continuous $\mathcal{P}^{2}(\mathcal{Q})$ probability measure-valued processes that are $\mathcal{F}_{s}$-adapted for $s\in [t,T]$, where the topology on $\mathcal{P}^{2}(\mathcal{Q})$ is induced by the Wasserstein metric $\mathcal{W}_{2}$. Let $\mathcal{I}^{2}\left([t,T];\boldsymbol{U}\right)$ be the set of all $\mathcal{F}_{s}$-progressively measurable $\boldsymbol{U}$-valued square-integrable process.
Given 
the probability measures $m=\left(m_{s}\right)_{t\leq s\leq T} \in \mathcal{M}\left([t,T];\mathcal{P}^{2}(\mathcal{Q})\right)$, the value function is defined by
\begin{equation}
\setlength{\abovedisplayskip}{3pt}
\setlength{\belowdisplayskip}{3pt}
\begin{aligned}	
v(t, \boldsymbol{x}) & :=\inf_{\boldsymbol{\alpha}_{s}}\ep_{t,\boldsymbol{x}}\Big[\int_{t}^{T}f(s,\boldsymbol{X}_{s},m_{s},\boldsymbol{\alpha}_{s})ds+g(\boldsymbol{X}_{T},m_{T})\Big],\label{eq:2-1-1}
\end{aligned}
\end{equation}
where $\ep_{t, \boldsymbol{x}}$ denotes the conditional expectation given
$\boldsymbol{X}_{t}=\boldsymbol{x}$, the running cost
$f:[t, T] \times \mathcal{Q} \times \mathcal{P}^2(\mathcal{Q}) \times \boldsymbol{U} \rightarrow$ $\mathbb{R}$, and
the terminal cost
$g:\mathcal{Q} \times \mathcal{P}^2(\mathcal{Q}) \rightarrow$ $\mathbb{R}$ are both measurable functions.

In this paper, we restrict ourselves to equilibrium given by Markovian strategy in feedback (closed-loop) form $\boldsymbol{\alpha}_s=\boldsymbol{\phi}\left(s, \boldsymbol{x}\right)$ with $ \boldsymbol{X}_s=\boldsymbol{x}$, for a deterministic function $\boldsymbol{\phi}$.
Denote by $\mathcal{A}$ the set of admissible controls over the interval $[0, T]$, which is defined as follows.
\begin{defn}
Denote $\mathscr{F}_{s}=\sigma\left\{ \boldsymbol{X}_{t},\boldsymbol{\alpha}_{t},0\leq t\leq s\right\} $.
The control $\boldsymbol{\alpha}_{t}$ is said to
be an admissible feedback control if it satisfies the following conditions:
\begin{itemize}
  \item [1)]
  $\left(\boldsymbol{\alpha}_s\right)_{t \leq s \leq T}\in\mathcal{I}^{2}\left([t,T];\boldsymbol{U}\right)$;
  \item [2)]
  $\pr\left(\boldsymbol{X}_{s}\in A\mid\mathscr{F}_{t}\right)=\pr\left(\boldsymbol{X}_{s}\in A\mid\sigma(\boldsymbol{X}_{t})\right)$, for any $A\in\mathscr{F}_{t}$.
\end{itemize}
\end{defn}
Recall that the Hamiltonian of the stochastic control problem is (the family $m=\left(m_t\right)_{0 \leq t \leq T}$ of probability distributions has been frozen)
\begin{equation*}
\setlength{\abovedisplayskip}{3pt}
\setlength{\belowdisplayskip}{3pt}
	H^{m_t}(t, \boldsymbol{x}, \boldsymbol{y}, \boldsymbol{\alpha})=\boldsymbol{y}^{\top} \boldsymbol{b}\left(t, \boldsymbol{x}, m_t, \boldsymbol{\alpha}\right)+f\left(t, \boldsymbol{x}, m_t, \boldsymbol{\alpha}\right).
\end{equation*}
Then
the value function $v$ is the solution of the HJB equation
\begin{equation}
\setlength{\abovedisplayskip}{3pt}
\setlength{\belowdisplayskip}{3pt}
\left\{ \begin{array}{l}
	\partial_t v+\frac{\boldsymbol{a}(t,\boldsymbol{x})}{2} \partial_{\boldsymbol{x} \boldsymbol{x}}^2 v +\mathcal{H}^{m_t}\left(t, \boldsymbol{x}, \partial_{\boldsymbol{x}} v(t, \boldsymbol{x})\right)=0,\\
v(T, \boldsymbol{x})=g\left(\boldsymbol{x}, m_T\right),
\end{array}\right.\label{eq:2-3-1}
\end{equation}
where $\boldsymbol{a}(t,\boldsymbol{x}):=\boldsymbol{\varSigma}(t,\boldsymbol{x})\boldsymbol{\varSigma}^{\top}(t,\boldsymbol{x})=\{a_{ij}(t,\boldsymbol{x})\},i,j=1,\ldots,d $, and $\mathcal{H}^{m_t}(t, \boldsymbol{x}, \boldsymbol{y}):=\inf _{\boldsymbol{\alpha} \in \mathcal{A}} H^{m_t}(t, \boldsymbol{x}, \boldsymbol{y}, \boldsymbol{\alpha})$.
\begin{defn}
(MFGs equilibrium). The (control, distribution)
$\boldsymbol{\alpha}^*=\left(\boldsymbol{\alpha}_s^*\right)_{t \leq s \leq T} \in \mathcal{I}^2\left([t, T]; \boldsymbol{U}\right)$, $m^* \in \mathcal{M}\left([t, T] ; \mathcal{P}^2\left(\mathcal{Q}\right)\right)$ is a mean-field equilibrium to the MFGs, if $\boldsymbol{\alpha}^*$ solves (\ref{eq:2-1-1}) given the stochastic measure $m^*$, and the distribution of the optimal path $\boldsymbol{X}_s^{\boldsymbol{\alpha}^*}$ coincides with the measure $m_s^*=\mathcal{L}\left(\boldsymbol{X}_s^{\boldsymbol{\alpha}^*}\right)$, where $\mathcal{L}(\cdot)$ is the law.
\end{defn}
In view of \cite{As:15}, we give the following assumptions that play a key role in the subsequent convergence analysis.
\begin{asm}\label{assumption 1}
Let $K$ be the same constant (for notational simplicity) for all assumptions below.

\noindent(1) (Lipschitz) $\partial_{\boldsymbol{x}}f,\partial_{\boldsymbol{\alpha}}f,\partial_{\boldsymbol{x}}g$
exist and are $K$-Lipschitz continuous in $(\boldsymbol{x},\boldsymbol{\alpha},m)$
uniformly in $t$, i.e., for any $t\in[0,T]$, $\boldsymbol{x},\boldsymbol{x}^{\prime}\in\mathcal{Q},\boldsymbol{\alpha},\boldsymbol{\alpha}^{\prime}\in\boldsymbol{U},m,m^{\prime}\in\mathcal{P}^{2}\left(\mathcal{Q}\right)$
\begin{equation*}
\setlength{\abovedisplayskip}{3pt}
\setlength{\belowdisplayskip}{3pt}
\begin{aligned} & \left\Vert \partial_{\boldsymbol{x}}g(\boldsymbol{x},m)-\partial_{\boldsymbol{x}}g(\boldsymbol{x}^{\prime},m^{\prime})\right\Vert \leq K\left(\left\Vert \boldsymbol{x}-\boldsymbol{x}^{\prime}\right\Vert +\mathcal{W}_{2}(m,m^{\prime})\right),\\
 & \left\Vert \partial_{\boldsymbol{x}}f(t,\boldsymbol{x},m,\boldsymbol{\alpha})-\partial_{\boldsymbol{x}}f(t,\boldsymbol{x}^{\prime},m^{\prime},\boldsymbol{\alpha}^{\prime})\right\Vert \\
 & \leq K\left(\left\Vert \boldsymbol{x}-\boldsymbol{x}^{\prime}\right\Vert +\left\Vert \boldsymbol{\alpha}-\boldsymbol{\alpha}^{\prime}\right\Vert +\mathcal{W}_{2}(m,m^{\prime})\right),\\
 & \left\Vert \partial_{\boldsymbol{\alpha}}f(t,\boldsymbol{x},m,\boldsymbol{\alpha})-\partial_{\boldsymbol{\alpha}}f(t,\boldsymbol{x}^{\prime},m^{\prime},\boldsymbol{\alpha}^{\prime})\right\Vert \\
 & \leq K\left(\left\Vert \boldsymbol{x}-\boldsymbol{x}^{\prime}\right\Vert +\left\Vert \boldsymbol{\alpha}-\boldsymbol{\alpha}^{\prime}\right\Vert +\mathcal{W}_{2}(m,m^{\prime})\right),\\
 & \left\Vert \boldsymbol{b}_{0}(t,m)-\boldsymbol{b}_{0}\left(t,m^{\prime}\right)\right\Vert \leq K\mathcal{W}_{2}(m,m^{\prime}),
\end{aligned}
\end{equation*}
where the diffusion coefficient $\boldsymbol{\varSigma}(t,\boldsymbol{x})$ is $K$-Lipschitz in $\boldsymbol{x}$ uniformly in $t$: $\| \boldsymbol{\varSigma}(t, \boldsymbol{x})-\boldsymbol{\varSigma}(t, \boldsymbol{x}^{\prime})\| \leq K\|\boldsymbol{x}-\boldsymbol{x}^{\prime}\|$.

\noindent(2) (Growth) $\partial_{\boldsymbol{x}}f,\partial_{\boldsymbol{\alpha}}f,\partial_{\boldsymbol{x}}g$
satisfy a linear growth condition, i.e., for any $t\in[0,T]$, $\boldsymbol{x}\in\mathcal{Q},\boldsymbol{\alpha}\in\boldsymbol{U},m\in\mathcal{P}^{2}\left(\mathcal{Q}\right)$,
\vspace{-6pt}
\begin{equation*}
\setlength{\abovedisplayskip}{3pt}
\setlength{\belowdisplayskip}{3pt}
\begin{aligned} & \Vert\partial_{\boldsymbol{x}}g(\boldsymbol{x},m)\Vert\leq K[1+\|\boldsymbol{x}\|+(\int_{\mathcal{Q}}\|\boldsymbol{y}\|^{2}\mathrm{~d}m(\boldsymbol{y}))^{\frac{1}{2}}],\\
 & \Vert\partial_{\boldsymbol{x}}f(t,\boldsymbol{x},m,\boldsymbol{\alpha})\Vert\leq K[1+\|\boldsymbol{x}\|+\|\boldsymbol{\alpha}\|+(\int_{\mathcal{Q}}\|\boldsymbol{y}\|^{2}\mathrm{~d}m(\boldsymbol{y}))^{\frac{1}{2}}],\\
 & \Vert\partial_{\boldsymbol{\alpha}}f(t,\boldsymbol{x},m,\boldsymbol{\alpha})\Vert\leq K[1+\|\boldsymbol{x}\|+\|\boldsymbol{\alpha}\|+(\int_{\mathcal{Q}}\|\boldsymbol{y}\|^{2}\mathrm{~d}m(\boldsymbol{y}))^{\frac{1}{2}}].
\end{aligned}
\end{equation*}
In addition $f,g$ satisfy quadratic growth condition in $m$ :
\begin{equation*}
\setlength{\abovedisplayskip}{3pt}
\setlength{\belowdisplayskip}{3pt}
\begin{gathered}\left|g(0,m)\right|\leq K(1+\int_{\mathcal{Q}}\|\boldsymbol{y}\|^{2}\mathrm{~d}m(\boldsymbol{y})),
\end{gathered}
\end{equation*}
\begin{equation*}
\setlength{\abovedisplayskip}{3pt}
\setlength{\belowdisplayskip}{3pt}
\begin{gathered}\left|f(t,0,m,0)\right|\leq K(1+\int_{\mathcal{Q}}\|\boldsymbol{y}\|^{2}\mathrm{~d}m(\boldsymbol{y})).
\end{gathered}
\end{equation*}
(3) (Convexity) $g$ is convex in $\boldsymbol{x}$ and $f$ is convex
jointly in $(\boldsymbol{x},\boldsymbol{\alpha})$ with strongly convex
in $\boldsymbol{\alpha}$, i.e., for any $\boldsymbol{x},\boldsymbol{x}^{\prime}\in\mathcal{Q},\boldsymbol{\alpha},\boldsymbol{\alpha}^{\prime}\in\boldsymbol{U},m\in\mathcal{P}^{2}\left(\mathcal{Q}\right)$,
\begin{equation*}
\setlength{\abovedisplayskip}{3pt}
\setlength{\belowdisplayskip}{3pt}
\left(\partial_{\boldsymbol{x}}g(\boldsymbol{x},m)-\partial_{\boldsymbol{x}}g\left(\boldsymbol{x}^{\prime},m\right)\right)^{T}\left(\boldsymbol{x}-\boldsymbol{x}^{\prime}\right)\geq0,
\end{equation*}
and there exist a constant $c_{f}>0$ such that for any $t\in[0,T],\boldsymbol{x},\boldsymbol{x}^{\prime}\in\mathcal{Q},\boldsymbol{\alpha},\boldsymbol{\alpha}^{\prime}\in\boldsymbol{U},m\in\mathcal{P}^{2}\left(\mathcal{Q}\right)$,
\begin{equation*}
\setlength{\abovedisplayskip}{3pt}
\setlength{\belowdisplayskip}{3pt}
\begin{aligned}
& f(t,\boldsymbol{x}',m,\boldsymbol{\alpha}')-f(t,\boldsymbol{x},m,\boldsymbol{\alpha})-\partial_{\boldsymbol{x}}f(t,\boldsymbol{x},m,\boldsymbol{\alpha})^{\top}(\boldsymbol{x}^{\prime}-\boldsymbol{x})\\
 & \geq\partial_{\boldsymbol{\alpha}}f(t,\boldsymbol{x},m,\boldsymbol{\alpha})^{\top}(\boldsymbol{\alpha}^{\prime}-\boldsymbol{\alpha})+c_{f}\left\Vert \boldsymbol{\alpha}^{\prime}-\boldsymbol{\alpha}\right\Vert ^{2}.
\end{aligned}
\end{equation*}
(4) (Separable in $\boldsymbol{\alpha}$,$m$) $f$ is of the form
\begin{equation*}
\setlength{\abovedisplayskip}{3pt}
\setlength{\belowdisplayskip}{3pt}
f(t,\boldsymbol{x},m,\boldsymbol{\alpha})=f^{0}(t,\boldsymbol{x},\boldsymbol{\alpha})+f^{1}(t,\boldsymbol{x},m),
\end{equation*}
where $f^{0}$ is assumed to be convex in $(\boldsymbol{x},\boldsymbol{\alpha})$
and strongly convex in $\boldsymbol{\alpha}$, and $f^{1}$ is assumed
to be convex in $\boldsymbol{x}$.

\noindent(5) (Weak monotonicity) For all $t\in[0,T],m,m^{\prime}\in\mathcal{P}^{2}\left(\mathcal{Q}\right)$
and $\varXi\in\mathcal{P}^{2}\left(\mathcal{Q}\times\mathcal{Q}\right)$
with marginals $m,m^{\prime}$ respectively,
\begin{equation*}
\setlength{\abovedisplayskip}{3pt}
\setlength{\belowdisplayskip}{3pt}
\begin{aligned}
& \int_{\mathcal{Q}\times\mathcal{Q}}\left[(\partial_{\boldsymbol{x}}g(\boldsymbol{x},m)-\partial_{\boldsymbol{x}}g(\boldsymbol{y},m^{\prime}))^{T}(\boldsymbol{x}-\boldsymbol{y})\right]\varXi(\mathrm{d}\boldsymbol{x},\mathrm{~d}\boldsymbol{y})\geq0,\\
 & \int_{\mathcal{Q}\times\mathcal{Q}}\left[(\partial_{\boldsymbol{x}}f(t,\boldsymbol{x},m,\boldsymbol{\alpha})-\partial_{\boldsymbol{x}}f(t,\boldsymbol{y},m^{\prime},\boldsymbol{\alpha}))^{T}(\boldsymbol{x}-\boldsymbol{y})\right]\\
 & \times\varXi(\mathrm{d}\boldsymbol{x},\mathrm{~d}\boldsymbol{y})\geq0.
\end{aligned}
\end{equation*}
\end{asm}
\begin{remark}
Note that Assumption \ref{assumption 1} extends conditions A and B in \cite{As:15} by considering a general drift coefficient $\boldsymbol{b}(t,\boldsymbol{x},m,\boldsymbol{\alpha})$. Under Assumption \ref{assumption 1} (1) and (3), there is a unique optimal control $\hat{\boldsymbol{\alpha}}$ to solve the MFGs, and the optimal control $\hat{\boldsymbol{\alpha}}$ is Lipschitz continuous in $(\boldsymbol{x}, m)$ uniformly in $t$. Under Assumption \ref{assumption 1} (1)-(5), there is a unique solution to the MFGs. A candidate function $f$, $g$ and $\boldsymbol{b}$ that satisfies Assumption \ref{assumption 1} is given as $f(t, \boldsymbol{x}, m, \boldsymbol{\alpha})=A \boldsymbol{\alpha}^{\top}\boldsymbol{\alpha}+B\left(\boldsymbol{x}-\int \boldsymbol{z} d m(\boldsymbol{z})\right)^2$, $g(\boldsymbol{x}, m)=C\left(\boldsymbol{x}-\int \boldsymbol{z} d m(\boldsymbol{z})\right)^2$ and $\boldsymbol{b}(t,\boldsymbol{x},m,\boldsymbol{\alpha})=\boldsymbol{b}_{0}(t,m)+\boldsymbol{b}_{1}^{\top}(t)\boldsymbol{x}+\boldsymbol{b}_{2}^{\top}(t)\boldsymbol{\alpha}$, where $A,B,C>0$, $\boldsymbol{b}_{0}\in\mathbb{R}^{d},\boldsymbol{b}_{1}\in\mathbb{R}^{d\times d}$
and $\boldsymbol{b}_{2}\in\mathbb{R}^{d\times k}$ are measurable
functions and bounded by $K$.
\end{remark}

\section{Numerical algorithm}\label{Numerical algorithm}

\subsection{MCAM}
In this section, following \cite{KhDp:01}, we
introduce MCAM briefly, and construct the transition probabilities of MCAM for an
iterative computational scheme of the MFGs.
First, we introduce a pair of stepsizes $h=(h_{1},h_{2})$, where $h_{1}>0$ is the discretization 
step size for state variables, and $h_{2}>0$ is 
the stepsize for
that of the time variable.
Suppose that $N_{h_{2}}=T/h_{2}$ is an integer without loss of generality. Define $\boldsymbol{S}^{h_{1}}:=\{ (k^{1}h_{1},\dots,k^{d}h_{1})^{\top}:k^{i}=0,\pm1,\dots,i=1,\dots,d\} $
and let $\{\boldsymbol{X}_{n}^{h, m},n<\infty\}$ be a discrete-time
controlled Markov chain with state space $\boldsymbol{S}^{h_{1}}$
associated with the parameter $h$ and mean-field variable $m$. Let $\boldsymbol{\alpha}^{h, m}=(\boldsymbol{\alpha}_{0}^{h, m},\boldsymbol{\alpha}_{1}^{h, m},\dots,\boldsymbol{\alpha}_{T}^{h, m})$
and $m^{h}=(m_{0}^{h},m_{1}^{h},\ldots,m_{T}^{h})$ denote the sequences of
the control actions and mean-field interactions (population distribution)
at time $0,1,\dots$, respectively.

Denote by $\pr^{h, m}((\boldsymbol{y},\boldsymbol{z})|\boldsymbol{r})$
the probability that $\boldsymbol{X}$ transits from state $\boldsymbol{y}$
at time $nh_{2}$ to state $\boldsymbol{z}$ at time $(n+1)h_{2}$
conditioned on $\boldsymbol{\alpha}_{n}^{h,m}=\boldsymbol{r}$. Let
$\mathcal{A}^{h}$ denote the collection of ordinary controls,
which is determined by a sequence of measurable functions $F_{n}^{h}(\cdot)$
such that
$\boldsymbol{\alpha}_{n}^{h,m}=F_{n}^{h}(\boldsymbol{X}_{k}^{h,m},k\leq n,\boldsymbol{\alpha}_{k}^{h,m},k<n).$
We say that $\boldsymbol{\alpha}_n^{h, m}$ is admissible for the chain if $\boldsymbol{\alpha}_n^{h, m}$ is $\boldsymbol{U}$-valued random variables, respectively, and the Markov property still holds, namely,
\begin{equation*}
\setlength{\abovedisplayskip}{3pt}
\setlength{\belowdisplayskip}{3pt}
	\begin{aligned}
& \pr \left\{\boldsymbol{X}_{n+1}^{h,m}=\boldsymbol{z} \mid \boldsymbol{X}_k^{h,m}, \boldsymbol{\alpha}_k^{h, m}, k \leq n\right\} \\
= & \pr \left\{\boldsymbol{X}_{n+1}^{h,m}=\boldsymbol{z} \mid \boldsymbol{X}_n^{h,m}, \boldsymbol{\alpha}_n^{h, m} \right\}:=\pr^h\left((\boldsymbol{X}_n^{h,m}, \boldsymbol{z}) \mid \boldsymbol{\alpha}_n^{h,m} \right).
\end{aligned}
\end{equation*}
By using the above-mentioned Markov chain, we can approximate the value
function defined in (\ref{eq:2-1-1}) by
\begin{equation}
\setlength{\abovedisplayskip}{3pt}
\setlength{\belowdisplayskip}{3pt}
\begin{aligned}
v^{h}(t,\boldsymbol{x}):= & \inf_{\boldsymbol{\alpha}^{h,m}\in\mathcal{A}^{h}}\ep_{t, \boldsymbol{x}}\Big[\sum_{n=i'}^{N_{h_{2}}-1}f(nh_{2},\boldsymbol{X}_{n}^{h,m},m_{n}^{h},\boldsymbol{\alpha}_{n}^{h,m})h_{2}\\
 & \hspace*{1.6in}
+g(\boldsymbol{X}_{N_{h_{2}}},m_{N_{h_{2}}})\Big],\label{eq:3-1}
\end{aligned}
\end{equation}
where $\boldsymbol{X}_{N_{h_{2}}}$ and $m_{N_{h_{2}}}$ are the terminal value of the discretized state process and mean-field interaction, respectively.

Now, we need to find transition probabilities such that the sequence $\{\boldsymbol{X}_{n}^{h, m},n<\infty\}$ constructed above is locally consistent w.r.t. (\ref{eq:2-1-2}), that is, it satisfies
\begin{equation}
\setlength{\abovedisplayskip}{3pt}
\setlength{\belowdisplayskip}{3pt}
\begin{aligned}
\ep_{\boldsymbol{x}, n}^{h, \boldsymbol{r}} \triangle \boldsymbol{X}_n^{h, m} & =\boldsymbol{b}(t, \boldsymbol{x}, m, \boldsymbol{r}) h_2+o(h_2), \\
{\rm Cov}_{\boldsymbol{x}, n}^{h, \boldsymbol{r}} \triangle \boldsymbol{X}_n^{h, m} & =\boldsymbol{a}(t,\boldsymbol{x}) h_2+o(h_2), \\
\sup _n\left|\triangle \boldsymbol{X}_n^{h, m}\right| & \rightarrow 0, \quad \text { as } h \rightarrow 0, \label{eq:3-2}
\end{aligned}
\end{equation}
where $\bigtriangleup\boldsymbol{X}_{n}^{h,m}:=\boldsymbol{X}_{n+1}^{h,m}-\boldsymbol{X}_{n}^{h,m}$,
$\ep_{\boldsymbol{x},n}^{h,\boldsymbol{r}}$ and ${\rm Cov}_{\boldsymbol{x},n}^{h,\boldsymbol{r}}$
denote the conditional expectation and covariance given by $\{\boldsymbol{X}_{k}^{h,m},\boldsymbol{\alpha}_{k}^{h,m},k\leq n,\boldsymbol{X}_{n}^{h,m}=\boldsymbol{x},\boldsymbol{\alpha}_{n}^{h,m}=\boldsymbol{r}\}$,
respectively. Under Assumption \ref{assumption 1}, there exists at least one admissible control $\hat{\boldsymbol{\alpha}}(\cdot)$ (we omit the superscript $h$, $m$ on $\hat{\boldsymbol{\alpha}}^{h, m}(\cdot)$) so that we can drop inf in (\ref{eq:2-3-1}). Then (\ref{eq:2-3-1}) can be discretized by using finite difference method with stepsizes $h_{1}$ and $h_{2}$.

As a result, the transition probabilities of MCAM for the MFGs are
constructed as follows (see Appendix A for a detailed proof)
\begin{equation}
\setlength{\abovedisplayskip}{3pt}
\setlength{\belowdisplayskip}{3pt}
\begin{aligned} & \pr^{h}(\boldsymbol{x},\boldsymbol{x}\pm h_{1}\boldsymbol{e}_{i}\mid\hat{\boldsymbol{\alpha}})=\frac{(a_{ii}^{2}/2-\sum_{j\neq i}|a_{ij}|/2+b_{i}^{\pm}h_{1})h_{2}}{h_{1}^{2}},\\
 & \pr^{h}(\boldsymbol{x},\boldsymbol{x}+h_{1}\boldsymbol{e}_{i}\pm h_{1}\boldsymbol{e}_{j}\mid\hat{\boldsymbol{\alpha}})=\frac{a_{ij}^{\pm}h_{2}}{2h_{1}^{2}},\\
 & \pr^{h}(\boldsymbol{x},\boldsymbol{x}-h_{1}\boldsymbol{e}_{i}\pm h_{1}\boldsymbol{e}_{j}\mid\hat{\boldsymbol{\alpha}})=\frac{a_{ij}^{\pm}h_{2}}{2h_{1}^{2}},\\
 & \pr^{h}(\boldsymbol{x},\boldsymbol{x}\mid\boldsymbol{\alpha})=1-\sum\pr^{h}(\boldsymbol{x},\boldsymbol{x}\pm h_{1}\boldsymbol{e}_{i}\mid\hat{\boldsymbol{\alpha}})\\
 & -\sum\pr^{h}(\boldsymbol{x},\boldsymbol{x}\pm h_{1}\boldsymbol{e}_{i}\pm h_{1}\boldsymbol{e}_{j}\mid\hat{\boldsymbol{\alpha}}),\label{eq:3-3}
\end{aligned}
\end{equation}
where $b_{i}^{+}$ and $b_{i}^{-}$ 
are
positive and negative parts $b_i$,
respectively. We can also define $a_{ij}^{+}$ and $a_{ij}^{-}$ similar to $b_{i}^{+}$ and $b_{i}^{-}$.

In contrast to a classical optimal control problem, a solution of the MFGs will involve a mean-field interaction $m$. In what follows, we introduce the induced measure (see, \cite{BeBaCa:18}) defined by $\Phi^{h}(m):=\mathbb{P}\circ(\hat{\boldsymbol{X}}^{h, m})^{-1}$ to construct an iterative mean-field interaction. Here, $\{\hat{\boldsymbol{X}}_{t}^{h,m}\} _{t\in[0,T]}$
is a continuous stochastic process which is linear on $[jh_{2},(j+1)h_{2}]$
and equals $\boldsymbol{X}_{n}^{h,m}$ at $n=jh_{2}$, for $j=0,\ldots N_{h_{2}}-1$,
where $\{\boldsymbol{X}_{n}^{h,m}\} $ is the controlled
Markov chain with the optimal feedback control $\hat{\boldsymbol{\alpha}}^{h,m}$.

\begin{construction}\label{construction 1}
Fix $T$ and $(\boldsymbol{x},m^{0})\in\boldsymbol{S}^{h_{1}}\times\mathcal{P}^{2}(\mathcal{Q})$. Let $\{\boldsymbol{X}_{n}^{h, m^{0}}\}$ be the Markov
chain associated with the optimal control $\hat{\boldsymbol{\alpha}}^{h, m^{0}}$.
Having defined the process $\{\boldsymbol{X}_{n}^{h, m^{n}}\} $
for $n\in\mathbb{N}$, set $m^{n+1}:=\Phi^{h}(m^{n})$
and let $\{\boldsymbol{X}_{n}^{h, m^{n+1}}\} $ be the
Markov chain associated with the optimal control $\hat{\boldsymbol{\alpha}}^{h, m^{n+1}}$.
\end{construction}
In general, the accuracy of control strategies obtained by MCAM method is subject to the denseness of the grids, which depends on the types and ranges of controls and states. When the ranges of controls and states are not comparable, finding a suitable stepsize for the lattice becomes challenging, which significantly impacts computational efficiency and accuracy. In what follows, we will present a new deep-learning method to reduce the computation burden in MCAM when the dimension of controls is large. Neural networks are constructed to approximate multiple controls. Let $\theta$ be the collection of all weights and bias terms in the neural networks, and denote by $N(t,\boldsymbol{x},\theta)$ the control strategy.
For simplicity, we assume that the range of the time and state variables are both approximated by $\tilde{n}$ nodes in the setup of neural network. That is, the time and state variables are approximated by $\{t_{\iota}\}_{\iota=1}^{\tilde{n}}$ and $\{\boldsymbol{x}_{\iota}\}_{\iota=1}^{\tilde{n}}$, respectively. Denote by $\theta_{k}^{h}$ and $N(t,\boldsymbol{x},\theta_{k}^{h})$ the $k$-th iterative parameters and control strategy, respectively. Then the formula (\ref{eq:3-1}) can be approximated as
\begin{equation*}
\setlength{\abovedisplayskip}{3pt}
\setlength{\belowdisplayskip}{3pt}
\begin{aligned}
& \hspace*{-0.1 in} v^{h}(t_{\iota},\boldsymbol{x}_{\iota}):=\inf_{\theta_{k}^{h}}J^{h,m}\left(t_{\iota},\boldsymbol{x}_{\iota},N(t_{\iota},\boldsymbol{x}_{\iota},\theta_{k}^{h})\right)\\
 & \ :=  \inf_{\theta_{k}^{h}}\ep_{t_{\iota},\boldsymbol{x}_{\iota}}
\Big[\sum_{n=i'}^{N_{h_{2}}-1}f\left(nh_{2},\boldsymbol{X}_{n}^{h,m},m_{n}^{h},
N(t_{\iota},\boldsymbol{x}_{\iota},\theta_{k}^{h})\right)h_{2}
\\
 & \hspace*{2.2in}
 +g(\boldsymbol{X}_{N_{h_{2}}}^{h,m},m_{N_{h_{2}}}^{h})\Big].
\end{aligned}
\end{equation*}
Now, assuming we are currently in the $k$-th iteration with the iterative
value function $v_{k-1}^{h}$ obtained from the previous iteration,
then the system of dynamic programming equations in the $k$-th iteration
follows
\begin{equation}
\setlength{\abovedisplayskip}{3pt}
\setlength{\belowdisplayskip}{3pt}
v_{k}^{h}(nh_{2},\boldsymbol{x})=S(nh_{2}
+h_{2},\boldsymbol{x},v_{k-1}^{h}(nh_{2}+h_{2},\boldsymbol{x}),\hat{\boldsymbol{\alpha}}),\label{eq:4-1}
\end{equation}
where
\begin{equation*}
\setlength{\abovedisplayskip}{3pt}
\setlength{\belowdisplayskip}{3pt}
\begin{aligned} & S(nh_{2}+h_{2},\boldsymbol{x},v_{k-1}^{h}(nh_{2}+h_{2},\boldsymbol{x}),\hat{\boldsymbol{\alpha}})\\
 & :=v_{k-1}^{h}(nh_{2}+h_{2},\boldsymbol{x})\pr^{h}(\boldsymbol{x},\boldsymbol{x}\mid\hat{\boldsymbol{\alpha}})\\
 & +\sum_{i=1}^{d}v_{k-1}^{h}(nh_{2}+h_{2},\boldsymbol{x}\pm h_{1}\boldsymbol{e}_{i})\pr^{h}(\boldsymbol{x},\boldsymbol{x}\pm h_{1}\boldsymbol{e}_{i}\mid\hat{\boldsymbol{\alpha}})\\
 & +\sum_{i=1}^{d}\sum_{j\neq i}v_{k-1}^{h}(nh_{2}+h_{2},\boldsymbol{x}\pm h_{1}\boldsymbol{e}_{i}\pm h_{1}\boldsymbol{e}_{j})\\
 & \quad\times\pr^{h}(\boldsymbol{x},\boldsymbol{x}\pm h_{1}\boldsymbol{e}_{i}\pm h_{1}\boldsymbol{e}_{j}\mid\hat{\boldsymbol{\alpha}})\\
 & +f(nh_{2}+h_{2},\boldsymbol{x},m,\hat{\boldsymbol{\alpha}})h_{2}\Big].
\end{aligned}
\end{equation*}
Then the equation (\ref{eq:4-1})
can be rewritten as $v_{k}^{h}(t_{\iota},\boldsymbol{x}_{\iota})=S\left(t_{\iota+1},\boldsymbol{x}_{\iota},v_{k-1}^{h}(t_{\iota+1},\boldsymbol{x}_{\iota}),N(t_{\iota},\boldsymbol{x}_{\iota},\theta_{k}^{h})\right), 1\leq\iota\leq n.$
We aim to show
\begin{equation}
\setlength{\abovedisplayskip}{3pt}
\setlength{\belowdisplayskip}{3pt}
\lim_{h\rightarrow0,k\rightarrow\infty}\theta_{k}^{h}=\theta^{*}, \label{eq:4-2}
\end{equation}
where $\theta^{*}$ is the local optimum.
\begin{remark}
Note that
 $h$ is the control grid and that $k$ is the iteration in the objective (\ref{eq:4-2}). 
 The iteration $k$ based on gradient descent method converges faster than the control grid $h$ based on the grid search method.
\end{remark}

\subsection{SA algorithm}
In this section, an
SA algorithm is introduced to obtain
$\lim_{l\rightarrow\infty}\theta_{k,l}^{h}=\theta_{k}^{h},$
where $\theta_{k,l}^{h}$ is the $l$-th estimate of the optimum of
$\theta$ in the $k$-th iteration. Given the time lattice $\{t_{\iota}\}_{\iota=1}^{n}$ and state lattice $\{\boldsymbol{x}_{\iota}\}_{\iota=1}^{n}$, define the global
improvement function $G^{h}(\theta^{h})$ as
\begin{equation}
\setlength{\abovedisplayskip}{3pt}
\setlength{\belowdisplayskip}{3pt}
G^{h}(\theta^{h}):= G^{h}\left(J^{h,m}\left(t_{\iota},\boldsymbol{x}_{\iota},N(t_{\iota},\boldsymbol{x}_{\iota},\theta^{h})\right),\; \iota=1,\ldots,n\right). \label{eq:4-3}
\end{equation}
The main goal is to use $N(t_{\iota},\boldsymbol{x}_{\iota},\theta^{h})$
and choose $\theta^{h}$ to maximize the opposite of the global improvement function
$G^{h}(\theta^{h})$. The choice of $G^{h}$ should serve
the goal that the value function will be improved on most states rather
than on every state of the time and state lattices. Usually, $G^{h}$ can be
chosen as the average value of the value function for simplicity.
We can also choose other general global improvement functions such
as the weighted average of the value function. The impact to efficiency
depends on the formulation of the finite-horizon optimization problem.

To proceed, a general setting of SA algorithm is provided for the
proof of convergence in Section \ref{Convergence of SA}. Without loss of generality, we assume that the parameters of neural network $\theta$ is an $r$-dimension
vector. Let $\theta_{l}$ be the $l$-th estimate of the optimum,
and denote by $\delta_{l}$ and $\varepsilon_{l}$ the stepsizes of
finite difference intervals and iterations, respectively. To simplify
the notation, we omit the subscript $k$ and the superscript $h$
in each term, and we write the global improvement function $G^{h}(\theta^{h})$
as $G(\theta)$. For more details of the procedure of SA algorithm, we refer the reader to \cite{KhYg:03}.
%
To verify the convergence of SA algorithm, the stepsizes $\delta_{l}$
and $\varepsilon_{l}$ satisfy the conditions
$\varepsilon_{l}\rightarrow0$,
$\sum_{l}\varepsilon_{l}=\infty$,
$\varepsilon_{l}/\delta_{l}\rightarrow0$,
$\sum_{l}\varepsilon_{l}^{2}/\delta_{l}^{2}<\infty$.

\begin{remark}
The first two conditions can be found in the pp.120 in \cite{KhYg:03}.
The third condition guarantees that the stepsize goes to $0$ much
faster than the finite difference intervals do, which is the main
requirement in this article.
\end{remark}

Here, we are concerned with the projection
algorithm, where the iterative
$\theta_{l}$ is confined to some bounded set, which
is a common
practice in applications. Define the projection region by
\begin{equation*}
\setlength{\abovedisplayskip}{3pt}
\setlength{\belowdisplayskip}{3pt}
\begin{aligned}
H & =\{\theta:N(t,\boldsymbol{x},\theta)\in[N(t,\boldsymbol{x},\theta_{0})-\delta_{l},N(t,\boldsymbol{x},\theta_{0})+\delta_{l}]\cap \mathcal{A},\\
 & \quad|\theta_{l,j}|\leq M,M\in\mathbb{R},j=1,\ldots,r\} ,
\end{aligned}
\end{equation*}
where $M$ is a sufficiently large positive number, $\theta_{0}$
is the MCAM's piecewise optimal control, and $\theta_{l,j}$ is the
$j$-th element of vector $\theta_{l}$. Then the algorithm is
\begin{equation}
\setlength{\abovedisplayskip}{3pt}
\setlength{\belowdisplayskip}{3pt}
\theta_{l+1}=\Pi_{H}[\theta_{l}+\varepsilon_{l}K_{l}],\label{eq:4-2-1}
\end{equation}
where $\Pi_{H}$ denotes the projection operator onto the constraint
set $H$, and $K_{l}$ is the gradient of $G(\theta)$. Let $\Pi_{H}(\theta)$ denote the closest point in $H$
to $\theta$. If the iteration is within the region we keep their
values; otherwise, we push them back to the boundaries.

\subsection{Summary of algorithm}
With the implementation details explained above, the pseudo-code of the proposed hybrid deep learning algorithm is summarized in Algorithm \ref{algorithm1}. Besides, a pictorial diagram of this algorithm with various modules is shown in Figure \ref{fig:my_label}.
\begin{figure}[http]
  \centering
  \includegraphics[width=0.45\textwidth]{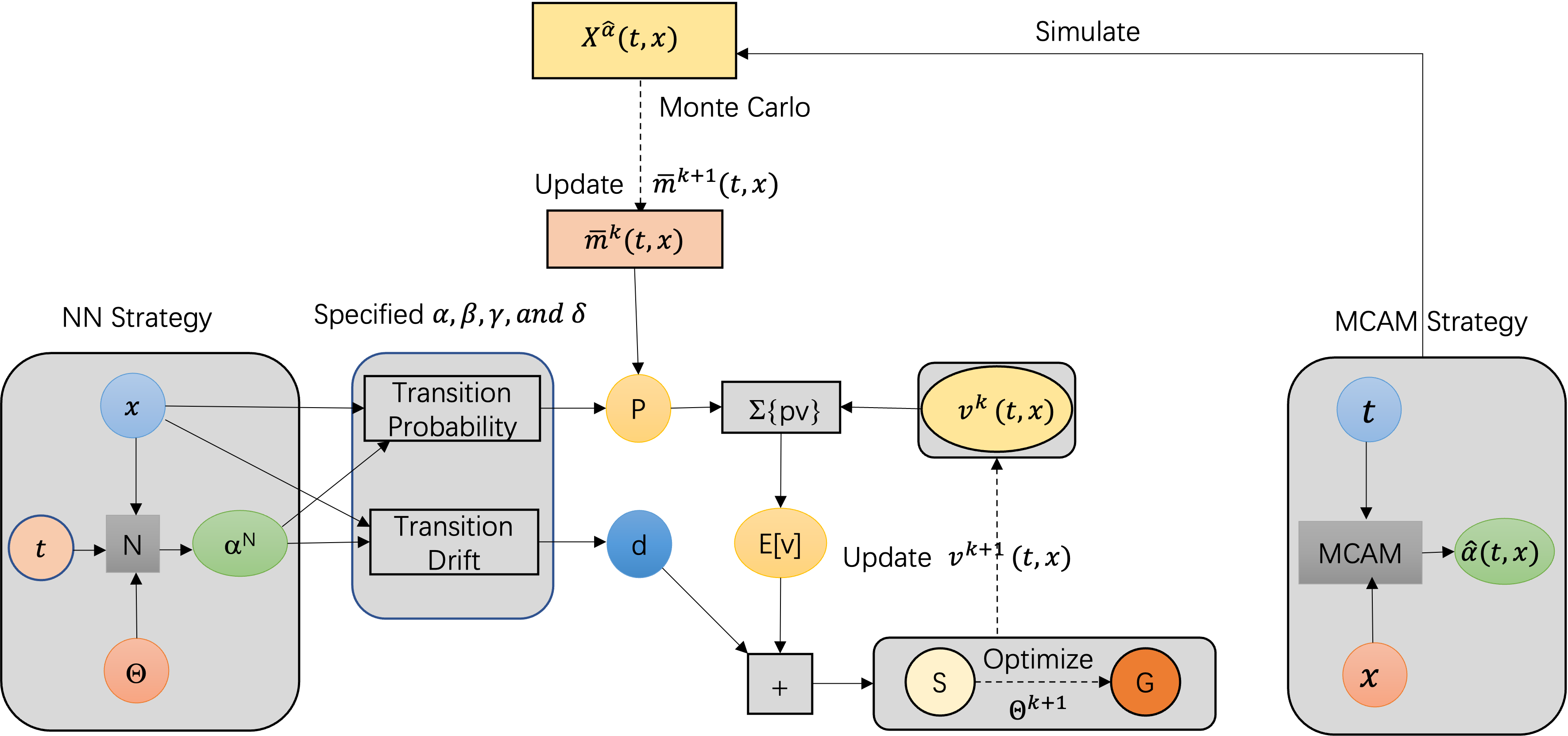}
  \caption{A pictorial diagram of algorithm.}
  \label{fig:my_label}
\end{figure}
\begin{algorithm}[http]
\caption{Framework of the hybrid deep learning method.}
\label{algorithm1}
\begin{algorithmic}[1]
\REQUIRE ~~\\ 
    Time and state lattices $\{t_{\iota}\}_{\iota=1}^{\tilde{n}},\{\boldsymbol{x}_{\iota}\}_{\iota=1}^{\tilde{n}}$ in deep learning;\\
    Time and state lattices $\{t_{\iota'}\}_{\iota'=1}^{n'},\{\boldsymbol{y}_{\iota'}\}_{\iota'=1}^{n'}$ in MCAM;\\
    Initial value function $v_{0}(t_{\iota},\boldsymbol{x}_{\iota})$ in deep learning;\\
    Initial value function $U_{0}(t_{\iota'},\boldsymbol{y}_{\iota'})$ in MCAM;\\
    Set of computation precision, $\tilde{\epsilon}$;\\
    Maximal number of learning times, $\widetilde{N}$;\\
    Initial distribution, $\bar{m}^{(0)}$;
\ENSURE ~~\\ 
    Approximation of the optimal control $N(t_{\iota},\boldsymbol{x}_{\iota},\theta_{k})$;
    \WHILE{$k<\widetilde{N}$}
    \STATE Obtain optimal control $\hat{\boldsymbol{\alpha}}_{k}(t_{\iota'},\boldsymbol{y}_{\iota'})$ by MCAM;
    \STATE Simulate $\boldsymbol{X}^{\hat{\boldsymbol{\alpha}}_{k}(t_{\iota'},\boldsymbol{y}_{\iota'})}$ and obtain $m^{(k)}$ by Monte Carlo;
    \STATE Update the value of $\bar{m}^{(k)}$ by
	$\bar{m}^{(k)}=\frac{k-1}{k} \bar{m}^{(k-1)}+\frac{1}{k} m^{(k)}$;
    \STATE Obtain initial value of the parameter $\theta_{k,0}$ by
	\begin{equation*}
	\setlength{\abovedisplayskip}{3pt}
	\setlength{\belowdisplayskip}{3pt}
	\theta_{k,0}=\operatorname{argmin}_{\theta\in H}\sum_{\iota'=1}^{n'}\left(\Vert \hat{\boldsymbol{\alpha}}_{k}(t_{\iota'},\boldsymbol{y}_{\iota'})-N(t_{\iota'},\boldsymbol{y}_{\iota'},\theta)\Vert\right)^{2};
	\end{equation*}
    \STATE Obtain the control using $G(\theta_{k})$ in (\ref{eq:4-3}) by stochastic approximation algorithm, $\theta_{k,l}=\Pi_{H}[\theta_{k-1,l}+\varepsilon_{l}K_{l}]$;
    \STATE Iterate the value function $U_{k}(t_{\iota'},\boldsymbol{y}_{\iota'})$,
    $U_{k}(t_{\iota'},\boldsymbol{y}_{\iota'})=S\left(t_{\iota'+1},\boldsymbol{y}_{\iota'},U_{k-1}(t_{\iota'+1},\boldsymbol{y}_{\iota'}),N(t_{\iota'},\boldsymbol{y}_{\iota'},\theta_{k})\right)$;
    \STATE Iterate the value function $v_{k}(t_{\iota},\boldsymbol{x}_{\iota})$, $v_{k}(t_{\iota},\boldsymbol{x}_{\iota})=S\left(t_{\iota+1},\boldsymbol{x}_{\iota},v_{k-1}(t_{\iota},\boldsymbol{x}_{\iota}),N(t_{\iota},\boldsymbol{x}_{\iota},\theta_{k})\right)$;
        \IF{$\sum_{\iota=1}^{n}\left(v_{k}(t_{\iota},\boldsymbol{x}_{\iota})-v_{k-1}(t_{\iota},\boldsymbol{x}_{\iota})\right)^{2}<\tilde{\epsilon}$}
            \STATE Stop;
        \ELSE
            \STATE Go to Step 1;
        \ENDIF
    \ENDWHILE
\RETURN $N(t_{\iota},\boldsymbol{x}_{\iota},\theta_{k})$. 
\end{algorithmic}
\end{algorithm}
\begin{remark}
The input values $U_{0}(t_{\iota'},\boldsymbol{y}_{\iota'})$ and $v_{0}(t_{\iota},\boldsymbol{x}_{\iota})$ are both initiated by the formulation of the terminal value function $g$. We use different time and state lattices denoted as $n'$ in MCAM and $n$ in deep learning. 
Specifically, we explain the following points: 1). To ensure consistency in the bounds of their value ranges, we require that these two lattices satisfy the conditions: $\boldsymbol{x}_0 = \boldsymbol{y}_0$ and $\boldsymbol{x}_n = \boldsymbol{y}_{n^{\prime}}$; 2). As is well known, MCAM faces exponential growth in computational complexity with the increase in the number of discrete points, resulting in the ``curse of dimensionality''. Therefore, we use a relatively small number of discrete points $n^{\prime}$ to obtain a rough initial value, which is then refined using the SGD algorithm to fit the parameters for the neural network, providing the initial parameter values for the SA algorithm in deep learning. This approach not only improves computational efficiency but also offers an initial general scale for further processing in deep learning; 3). In deep learning, we use a larger number of discrete points $n$ to train the neural network parameters and achieve more precise computational outcomes. In conclusion, to obtain the optimal parameters of the neural network, we apply MCAM with a coarse scale to estimate the initial guess for the neural network, and apply SA with a fine scale to refine the parameters within a bounded region.
 At step 1, we use Construction \ref{construction 1} to obtain an iterative mean-field interaction $m^{(k)}$, which plays a key role in getting the optimal control that depends on the mean-field interaction. In addition, we adopt the Monte Carlo method to obtain the $k$-th mean-field interaction $\bar{m}^{(k)}$. Specifically, we use a technique of average distributions as follows: (a) At the $k$-th iteration, given the measures $\bar{m}^{(k-1)} \in \mathcal{P}^2\left(\mathbb{R}^d\right)$ in (\ref{eq:2-1-2})-(\ref{eq:2-1-1}), and solve the optimal control in (\ref{eq:2-1-1}) denoted by $\boldsymbol{\alpha}^{k}$; (b) Solve
equation (\ref{eq:2-1-2}) for $\boldsymbol{X}^{\boldsymbol{\alpha}^{(k)}}$ and then infer the distribution $m^{(k)}=\mathcal{L}(\boldsymbol{X}^{\boldsymbol{\alpha}^{(k)}})$; (c) Pass the average distributions $\bar{m}^{(k)}$ to the next iteration.
\end{remark}

\section{Convergence of algorithm}\label{Convergence of algorithm}
This section focuses on the convergence of the algorithm.
We start with an initial guess $\theta_{k,0}^{h}$, then the iteration
will lead to the optimal set of parameters $\theta^{\ast}$.
\subsection{Convergence of SA algorithm}\label{Convergence of SA}
We aim to show
$	\lim_{l\rightarrow\infty}\theta_{k, l}^{h} =\theta_{k}^{h}.$
Define the projection term $z_{l}$ by writing (\ref{eq:4-2-1}) as
\begin{equation}
\setlength{\abovedisplayskip}{3pt}
\setlength{\belowdisplayskip}{3pt}
\theta_{l+1}=\theta_{l}+\varepsilon_{l}K_{l}+\varepsilon_{l}z_{l}.\label{eq:5-1-1}
\end{equation}
Thus $\varepsilon_{l}z_{l}=\theta_{l+1}-\theta_{l}-\varepsilon_{l}K_{l}$;
it is the vector of the shortest Euclidean length needed to take $\theta_{l}+\varepsilon_{l}K_{l}$
back to the constraint set $H$ if it escapes from $H$. Denote by $\widehat{G}(\theta,\eta)$ the observations of $G(\theta)$ with noise $\eta$, and we write $\widehat{G}(\theta,\eta)$ as $\widehat{G}(\theta,\eta)=G_{0}(\theta,\widetilde{\eta})+\widehat{\eta}$,
where $G_{0}(\cdot,\widetilde{\eta})$ is three-times continuously
differentiable for each $\widetilde{\eta}$, and $\widetilde{\eta}$
is the nonadditive noise, which is independent of the additive noise
$\widehat{\eta}$.

Let $\bar{G}(\theta)=\ep_{l}G_{0}(\theta,\widetilde{\eta}_{l}^{\pm})$,
where $\ep_{l}$ is the conditional expectation with respect to the
$\sigma$-field generated by $\{\widetilde{\eta}_{l}\} $.
Next we define the finite difference bias $\{\bar{\omega}_{l}\}$ as
$\bar{\omega}_{l,j}=\frac{\partial\bar{G}(\theta_{l})}{\theta_{l,j}}-\frac{\bar{G}(\theta_{l}+\delta_{l}\boldsymbol{e}_{j})-\bar{G}(\theta_{l}-\delta_{l}\boldsymbol{e}_{j})}{2\delta_{l}}.$
In what follows, we consider the two types of noise, uncorrelated
noise $\{\zeta_{l}\}$ and correlated noise $\{\psi_{l}\}$.
\begin{equation*}
\setlength{\abovedisplayskip}{3pt}
\setlength{\belowdisplayskip}{3pt}
\begin{aligned}
\zeta_{l,j}= & [\bar{G}(\theta_{l}+\delta_{l}\boldsymbol{e}_{j})-G_{0}(\theta_{l}+\delta_{l}\boldsymbol{e}_{j},\widetilde{\eta}_{l,j}^{+})]\\
 & -[\bar{G}(\theta_{l}-\delta_{l}\boldsymbol{e}_{j})-G_{0}(\theta_{l}-\delta_{l}\boldsymbol{e}_{j},\widetilde{\eta}_{l,j}^{-})],\\
\psi_{l,j}= & \widehat{\eta}_{l,j}^{+}-\widehat{\eta}_{l,j}^{-}.
\end{aligned}
\end{equation*}
Here, we use the following notation
\begin{equation*}
\setlength{\abovedisplayskip}{3pt}
\setlength{\belowdisplayskip}{3pt}
\begin{aligned}
 & \zeta_{l}=\left(\zeta_{l,1},\ldots,\zeta_{l,r}\right)^{\prime},\\
 & \psi_{l}=\left(\psi_{l,1},\psi_{l,2},\ldots,\psi_{l,r}\right)^{\prime},\bar{\omega}_{l}=(\bar{\omega}_{l,1},\ldots,\bar{\omega}_{l,r})^{\prime},\\
 & \bar{G}_{l}(\cdot)=\left(\bar{G}_{l,1,\theta_{l,1}}(\cdot),\bar{G}_{l,2,\theta_{l,2}}(\cdot),\ldots,\bar{G}_{l,r,\theta_{l,r}}(\cdot)\right)^{\prime}.
\end{aligned}
\end{equation*}
Then 
algorithm (\ref{eq:5-1-1}) can be written as
\begin{equation}
\setlength{\abovedisplayskip}{3pt}
\setlength{\belowdisplayskip}{3pt}
\theta_{l+1}=\theta_{l}+\varepsilon_{l}\bar{G}_{l,\theta}(\theta_{l})+\varepsilon_{l}\bar{\omega}_{l}+\varepsilon_{l}z_{l}+\varepsilon_{l}\frac{\zeta_{l}}{2\delta_{l}}+\varepsilon_{l}\frac{\psi_{l}}{2\delta_{l}}.\label{eq:5-1-2}
\end{equation}
In fact, the above algorithm is of the KW (Kiefer and Wolfowitz) type, and the last two terms in (\ref{eq:5-1-2}) can be considered as noise terms, in which $\psi_{l}$ and $\zeta_{l}$ represent additive and non-additive noise, respectively.

To establish convergence of the algorithm, we state some sufficient conditions needed for our recursive algorithm. 
\begin{asm}\label{assumption 2}
We make assumptions (1) and (2) as follows.

\noindent(1) The function $\widehat{G}(\cdot,\eta)$ is three-times continuously
differentiable for each $\eta$.

\noindent(2) $\{\widehat{\eta}_{l}\} $ is the stationary martingale
difference sequence satisfying $\ep|\widehat{\eta}_{l}|^{2}<\infty$;
$\{\widetilde{\eta}_{l}\}$ is a sequence of bounded
stationary $\phi$-mixing process with mixing rate $\phi(k)$ such
that $\sum_{k}\phi^{1/2}(k)<\infty$.
\end{asm}
To proceed, define $t_{0}=0$ and $t_{l}=\sum_{j=0}^{l-1}\varepsilon_{j}$.
For $t\geq0$, let $\varDelta(t)$ denote the unique value of $l$ such that
$t_{l}\leq t<t_{l+1}$. For $t<0$, set $\varDelta(t)=0$. Define the continuous-time
interpolation $\theta_{0}(\cdot)$ on $(-\infty,\infty)$ by $\theta_{0}(t)=\theta_{0}$,
for $t\leq0$, and for $t\geq0$, $\theta_{0}(t)=\theta_{l}$, for
$t_{l}\leq t<t_{l+1}$. Define the sequence of shifted process
$\theta_{l}(\cdot)$ as $\theta_{l}(t)=\theta_{0}(t_{l}+t), -\infty<t<\infty$.
Define $Z_{0}(t)=0$, for $t\leq0$ and $Z_{0}(t)=\sum_{i=0}^{\varDelta(t)-1}\varepsilon_{i}z_{i}$, for $t\geq0$, where $z_{i}=0$, for $i<0$.
Define the shifted process $Z_{l}(t)$ by
$Z_l(t)=Z_0(t_l+t)-Z_0(t_l)=\sum_{i=l}^{\varDelta(t_l+t)-1} \varepsilon_i z_i, t \geq 0, \quad Z_l(t)=-\sum_{i=\varDelta(t_l+t)}^{l-1} \varepsilon_i z_i, t<0.$
Then the interpolated process $\theta_{l}(\cdot)$ can be rewritten
as below
\begin{equation*}
\setlength{\abovedisplayskip}{3pt}
\setlength{\belowdisplayskip}{3pt}
\begin{aligned}
\theta_{l}(t) & =\theta_{l}+\sum_{i=l}^{m(t_{l}+t)-1}\varepsilon_{i}\bar{G}_{i,\theta}(\theta_{i})+\sum_{i=l}^{m(t_{l}+t)-1}\varepsilon_{i}\frac{\zeta_{i}}{2\delta_{i}}\\
& \quad+\sum_{i=l}^{m(t_{l}+t)-1}\varepsilon_{i}\frac{\psi_{i}}{2\delta_{i}}+\sum_{i=l}^{m(t_{l}+t)-1}\varepsilon_{i}b_{i}+\sum_{i=l}^{m(t_{l}+t)-1}\varepsilon_{i}z_{i}\\
 & =\theta_{l}+g_{l}(t)+\zeta_{l}(t)+\psi_{l}(t)+b_{l}(t)+Z_{l}(t),
\end{aligned}
\end{equation*}
where
\begin{equation*}
\setlength{\abovedisplayskip}{3pt}
\setlength{\belowdisplayskip}{3pt}
\begin{aligned}
g_{l}(t) & =\sum_{i=l}^{\varDelta(t_{l}+t)-1}\varepsilon_{i}\bar{G}_{i,\theta}(\theta_{i}),\;\zeta_{l}(t)=\sum_{i=l}^{\varDelta(t_{l}+t)-1}\varepsilon_{i}\frac{\zeta_{i}}{2\delta_{i}},\\
\psi_{l}(t) & =\sum_{i=l}^{\varDelta(t_{l}+t)-1}\varepsilon_{i}\frac{\psi_{i}}{2\delta_{i}},\; b_{l}(t)=\sum_{i=l}^{\varDelta(t_{l}+t)-1}\varepsilon_{i}b_{i},\;\text{for \ensuremath{t\geq0}.}
\end{aligned}
\end{equation*}
Next, we define the projected ODE (ordinary differential equation) as follows,
\begin{equation}
\setlength{\abovedisplayskip}{3pt}
\setlength{\belowdisplayskip}{3pt}
\begin{aligned}
& \dot{\theta}(t)=-\bar{G}_\theta(\theta(t))+z(t), \quad z \in C(\theta), \\
& Z(t)=\int_0^t z(u) du.
\end{aligned}\label{eq:5-1-3}
\end{equation}
Here, for $\theta\in M^{0}$, the interior of $M,C(\theta)$
contains the zero element only; and for $\theta\in\partial M$, the
boundary of $M$. Let $C(\theta)$ be the infinite convex cone generated
by the outer normals at $\theta$ of the faces on which $\theta$
lies, and $z(\cdot)$ be the projection or the constraint term that is
the minimum force needed to keep $\theta(\cdot)$ in $M$.
\begin{thm}
Suppose the conditions (1) and (2) in Assumption \ref{assumption 2} are satisfied.
In addition, we have
\begin{equation*}
\setlength{\abovedisplayskip}{3pt}
\setlength{\belowdisplayskip}{3pt}
\sup_{l\leq k\leq m(t_{l}+T)}(\varepsilon_{k}/\delta_{k}^{2})/(\varepsilon_{l}/\delta_{l}^{2})\leq c_{1}(T), \text{ for some }c_{1}(T)<\infty.
\end{equation*}
Then, there is a null set $N$ such that for all $\omega\notin N,\{\theta_{l}(\cdot),Z_{l}(\cdot)\} $
is equicontinuous in the extended sense. Let $(\theta(\cdot),Z(\cdot))$
denote the limit of a convergent subsequence. Then, it satisfies the
projected ODE (\ref{eq:5-1-3}). If $\theta^{*}$ is an asymptotically
stable point of (\ref{eq:5-1-3}) and $\theta_{l}$ is in some compact
set that is a subset of the domain of attraction of $\theta^{*}$
$\text{w.p.}1$, then $\theta_{l}\rightarrow\theta^{*}$ $\text{w.p.}1$.
\end{thm}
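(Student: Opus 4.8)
The plan is to follow the ODE (ordinary differential equation) method for stochastic approximation in the spirit of \cite{KhYg:03}, treating the recursion (\ref{eq:5-1-2}) as a Kiefer--Wolfowitz scheme with projection and showing that the interpolated iterates track the projected ODE (\ref{eq:5-1-3}). The argument splits into three stages: (i) proving that every noise and bias contribution vanishes asymptotically along the interpolation; (ii) extracting an equicontinuous limit and identifying it with (\ref{eq:5-1-3}); and (iii) invoking asymptotic stability to pin the limit at $\theta^{*}$.

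First I would dispose of the two noise terms. The additive part $\psi_{l}(t)=\sum_{i=l}^{\varDelta(t_{l}+t)-1}\varepsilon_{i}\psi_{i}/(2\delta_{i})$ is a sum of scaled martingale differences, since $\psi_{l,j}=\widehat{\eta}_{l,j}^{+}-\widehat{\eta}_{l,j}^{-}$ inherits the martingale-difference structure of $\{\widehat{\eta}_{l}\}$ from Assumption \ref{assumption 2}(2). Applying a martingale maximal inequality together with the summability condition $\sum_{l}\varepsilon_{l}^{2}/\delta_{l}^{2}<\infty$, the associated martingale converges almost surely, so its tail increments, and hence $\sup_{0\le t\le T}\norm{\psi_{l}(t)}$, tend to $0$ w.p.1 off a null set. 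The correlated part $\zeta_{l}(t)$ is governed by the nonadditive $\phi$-mixing noise $\{\widetilde{\eta}_{l}\}$; here I would use the mixing-rate bound $\sum_{k}\phi^{1/2}(k)<\infty$ together with the new uniform stepsize-ratio hypothesis $\sup_{l\le k\le m(t_{l}+T)}(\varepsilon_{k}/\delta_{k}^{2})/(\varepsilon_{l}/\delta_{l}^{2})\le c_{1}(T)$ to run a summation-by-parts (or perturbed-test-function) estimate, showing that the correlated contributions average out and $\sup_{0\le t\le T}\norm{\zeta_{l}(t)}\to0$ w.p.1. Finally, three-times continuous differentiability of $G_{0}(\cdot,\widetilde{\eta})$ in Assumption \ref{assumption 2}(1) yields a central-difference bias $\bar{\omega}_{l}=O(\delta_{l}^{2})$ by Taylor expansion; since $\delta_{l}\to0$ and $\sum_{i}\varepsilon_{i}$ over $[t_{l},t_{l}+t]$ is bounded by roughly $t$, the bias process $b_{l}(t)$ also vanishes uniformly on $[0,T]$.

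With the noise removed, equicontinuity follows because $\theta_{l}$ remains in the bounded projection region $H$, so $\bar{G}_{\theta}$ is bounded there and $g_{l}(\cdot)$ is uniformly Lipschitz; the reflection term $Z_{l}(\cdot)$ is controlled by the amount of force needed to keep a bounded-velocity trajectory inside $M$, which delivers equicontinuity in the extended sense for the pair $\{\theta_{l}(\cdot),Z_{l}(\cdot)\}$. By the Arzel\`a--Ascoli theorem in the extended sense I would extract a convergent subsequence with limit $(\theta(\cdot),Z(\cdot))$. Passing to the limit in the interpolated recursion, the vanishing of $\psi_{l}$, $\zeta_{l}$ and $b_{l}$ leaves $\dot{\theta}(t)=-\bar{G}_{\theta}(\theta(t))+z(t)$ with $z\in C(\theta)$, while $Z(t)=\int_{0}^{t}z(u)\,du$ is the minimal constraint force keeping $\theta(\cdot)$ in $M$; this is exactly (\ref{eq:5-1-3}). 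For the last assertion, once $\theta^{*}$ is an asymptotically stable equilibrium of (\ref{eq:5-1-3}) and the iterates lie w.p.1 in a compact subset of its domain of attraction, any limit trajectory, being an arbitrarily long segment of a solution of the ODE, must be drawn into $\theta^{*}$; the standard stability argument of the ODE method then upgrades this to $\theta_{l}\to\theta^{*}$ w.p.1.

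I anticipate that the main obstacle is the treatment of the correlated, nonadditive noise $\zeta_{l}$ scaled by $1/(2\delta_{l})$. Unlike the additive martingale term, it cannot be handled by a single martingale inequality; the $\phi$-mixing structure must be exploited carefully, and it is precisely here that the uniform stepsize-ratio condition $\sup(\varepsilon_{k}/\delta_{k}^{2})/(\varepsilon_{l}/\delta_{l}^{2})\le c_{1}(T)$ is needed, so that the rescaled increments stay comparable across a window of length $T$ and the mixing cancellation can be summed. Verifying that this suffices to force $\zeta_{l}(\cdot)\to0$ almost surely, rather than merely in probability, is the delicate step.
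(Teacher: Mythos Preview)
Your proposal is correct and follows the same approach as the paper: the paper's proof simply invokes Theorems 4.2.2 and 6.5.1 of \cite{KhYg:03} and refers to Theorem 5.8 of \cite{JzYhYg:21}, which is precisely the Kushner--Yin ODE method for projected Kiefer--Wolfowitz schemes that you have sketched in detail. In effect you have unpacked the cited results, so your write-up is more explicit than the paper's but not methodologically different.
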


\begin{proof}
To prove this theorem, we apply Theorems 4.2.2 and 6.5.1 of \cite{KhYg:03}.
For more details of the proof procedure, we refer the reader to Theorem
5.8 of \cite{JzYhYg:21}.	
\end{proof}

\subsection{Convergence of MCAM}
This section deals with the convergence of MCAM, that is, 
$\lim_{h\rightarrow0,k\rightarrow\infty}\theta_{k}^{h} =\theta^{*}.	$
In what follows, we use the relaxed control (see, \cite{KhDp:01}) to solve the issue of closure with an ordinary control. The use of relaxed control gives us an alternative to obtain and characterize
the weak limit.
\begin{defn}
Let $\mathscr{B}(\boldsymbol{U})$ and $\mathscr{B}(\boldsymbol{U}\times[t,T])$
denote the Borel $\sigma$-algebras of $\boldsymbol{U}$ and $\boldsymbol{U}\times[t,T]$.
An admissible relaxed control or simply a relaxed control $\gamma$
is a measure on $\mathscr{B}(\boldsymbol{U}\times[t,T])$ such that
$\gamma(\boldsymbol{U}\times[t,s])=s-t$ for all $s\in[t,T]$.
\end{defn}

Given the relaxed control $\gamma$, which is progressively measurable with respect to the filtration of Brownian motions, the marginal of $\gamma$ on $[t, s]$ is a Lebesgue measure on $\mathscr{B}(\boldsymbol{U})$, such that $\gamma(d\boldsymbol{r},ds)=\gamma_{s}(d\boldsymbol{r})ds$.
We further define the relaxed control representation $\gamma$
of $\boldsymbol{\alpha}$ by
$\gamma_{t}(B)=I_{\{\boldsymbol{\alpha}_{t}\in B\}}$, for any $B\in\mathscr{B}(\boldsymbol{U}).$
Therefore, we can represent any ordinary admissible control $\boldsymbol{\alpha}$
as a relaxed control by using the definition $\gamma_{t}(\mathrm{d}\boldsymbol{r})=I_{\boldsymbol{\alpha}}(\boldsymbol{r})\mathrm{d}\boldsymbol{r}$,
where $I_{\boldsymbol{\alpha}}(\boldsymbol{r})$ is the indicator
function concentrated at the point $\boldsymbol{\alpha}=\boldsymbol{r}$.
Thus, the measure-valued derivative $\gamma_{t}(\cdot)$ of the relaxed
control representation of $\boldsymbol{\alpha}_{t}$ is a measure that
is concentrated at the point $\boldsymbol{\alpha}_{t}$. For each $t$,
$\gamma_{t}(\cdot)$ is a measure on $\mathscr{B}(\boldsymbol{U})$
satisfying $\gamma_{t}(\boldsymbol{U})=1$ and $\gamma(A)=\int_{\boldsymbol{U}\times[t,T]}I_{\{(\boldsymbol{r},s)\in A\}}\gamma_{t}(d\boldsymbol{r})ds$
for all $A\in\mathscr{B}(\boldsymbol{U}\times[t,T])$.

With the relaxed control representation, we need to further discretize the continuous processes $(\boldsymbol{X}(t),\gamma(t))$ to analyze their convergence. Meanwhile, because of the involvement of $m$, we introduce a small time interval (see, \cite{BeBaCa:18}) upon which the convergence of laws obtained from the iteration scheme can be demonstrated.
Let $q\in(0, 1)$, and we set
$k_{h}:=\min\left\{ k\in\mathbb{N}:\mathcal{W}_{2}^{2}\left(\Phi^{h}(m^{k}),m^{k}\right)\leq\frac{2q}{1-q}h_{1}^{2}\right\}$
and $m_{h}:=m^{k_{h}}$. For the existence of $k_{h}$, we refer the reader to \cite{BeBaCa:18}.
Based on the definition of $m_{h}$, we define the piecewise constant interpolations by
\begin{equation*}
\setlength{\abovedisplayskip}{3pt}
\setlength{\belowdisplayskip}{3pt}	
\begin{aligned}\boldsymbol{X}_{t}^{h,m_{h}} & =\boldsymbol{X}_{n}^{h,m_{h}},\quad\boldsymbol{\alpha}_{t}^{h,m_{h}}=\boldsymbol{\alpha}_{n}^{h,m_{h}},\\
\boldsymbol{W}_{t}^{h,m_{h}} & =\sum_{k=0}^{n-1}(\Delta\boldsymbol{X}_{k}^{h,m_{h}}-\ep_{k}^{h}\Delta\boldsymbol{X}_{k}^{h,m_{h}})/\boldsymbol{\varSigma}(kh_2,\boldsymbol{X}_{k}^{h,m_{h}}),\\
\gamma_{t}^{h,m_{h}}(\cdot) & =\gamma_{n}^{h,m_{h}}(\cdot),\quad\text{for \ensuremath{t\in[nh_{2},(n+1)h_{2}).}}
\end{aligned}
\end{equation*}
where $\ep_{k}^{h}$ denotes the expectation conditioned on the smallest $\sigma$-algebra of $\{\boldsymbol{X}_{k}^{h.m_h}, \boldsymbol{\alpha}_{k}^{h,m_h},k\leq n\}$.

Denote by $\Gamma^{h}$ the set of admissible relaxed control $\gamma^{h,m_{h}}$
such that $\gamma_{s}^{h,m_{h}}$ is a fixed probability measure in
the interval $[nh_{2},(n+1)h_{2})$. Then we rewrite the value function as
\begin{equation}
\setlength{\abovedisplayskip}{3pt}
\setlength{\belowdisplayskip}{3pt}
\begin{aligned}
v^{h,m_{h}}(t,\boldsymbol{x}) =& \inf_{\gamma^{h,m_{h}}\in\Gamma^{h}}\ep_{t,\boldsymbol{x}}\Big[\sum_{n=i'}^{N_{h_{2}}-1}f(nh_{2},\boldsymbol{X}_{n}^{h,m_{h}},m_{n}^{h},\gamma_{n}^{h,m_{h}})
\\
& \hspace*{1.2in}
\times h_{2}+g(\boldsymbol{X}_{N_{h_{2}}},m_{N_{h_{2}}})\Big].
\end{aligned}
\label{eq:5-2-1-2}
\end{equation}
Note that relaxed controls are a device mainly used for mathematical analysis purposes. Nevertheless, they can always be approximated by ordinary controls. This is referred to as a chattering lemma. Here we state the result below and postpone its proof to Appendix B.
\begin{lemma} \label{lemma1}
Fix $\tilde{\eta}>0$, and $\left(\boldsymbol{X}(\cdot),\gamma(\cdot),\boldsymbol{W}(\cdot)\right)$
be an $\tilde{\eta}$-optimal control ($\left(\boldsymbol{X}(\cdot), \gamma(\cdot), \boldsymbol{W}(\cdot)\right)$ being $\tilde{\eta}-$optimal means that these processes constitute a solution to the control problem that is nearly optimal, with a solution no lager than $\tilde{\eta}$). For each $\eta>0$, there is
an $\varepsilon>0$ and a probability space on which are defined $\boldsymbol{W}^{\eta}(\cdot)$,
a control $\boldsymbol{\alpha}^{\eta}(\cdot)$ which is an admissible
finite set $\mathcal{A}_{\varepsilon}\subset\mathcal{A}$ valued ordinary
control on the interval $[i\varepsilon,i\varepsilon+\varepsilon)$,
and a solution $\boldsymbol{x}^{\eta}(\cdot)$ such that $\left|J(t,\boldsymbol{x}^{\eta},\gamma^{\eta})-J(t,\boldsymbol{x},\gamma)\right|\leq\eta$.
There is a $\vartheta$ such that the approximating $\boldsymbol{\alpha}^{\eta}(\cdot)$
can be chosen so that its probability law at $n\varepsilon$, conditioned
on $\left\{ \boldsymbol{W}^{\eta}(\tau),\tau\leq n\varepsilon;\boldsymbol{\alpha}^{\eta}(i\varepsilon),i<n\right\} $
depends only on the samples $\left\{ \boldsymbol{W}^{\eta}(p\vartheta),p\vartheta\leq n\varepsilon;\boldsymbol{\alpha}^{\eta}(i\varepsilon),i<n\right\} $,
and is continuous in the $\boldsymbol{W}^{\varepsilon}(p\vartheta)$ arguments.
\end{lemma}
With Lemma \ref{lemma1}, we have the following two theorems, whose proofs are presented in Appendix C and D.
\begin{thm}\label{Theorem 5}
Let the approximating chain $
\{ \boldsymbol{X}_{n}^{h,m_{h}},n<\infty
\} $
constructed with transition probabilities defined in (\ref{eq:3-3})
be locally consistent with (\ref{eq:2-1-2}).
Let $\{\boldsymbol{\alpha}_{n}^{h,m_{h}},n<\infty\}$ be sequences
of admissible controls and $\boldsymbol{\gamma}^{h,m_{h}}(\cdot)$
be the relaxed control representation of $\boldsymbol{\alpha}^{h,m_{h}}(\cdot)$
(continuous time interpolation of $\boldsymbol{\alpha}_{n}^{h,m_{h}}$).
Then the sequence $
(\boldsymbol{X}^{h,m_{h}},\boldsymbol{W}^{h,m_{h}},\gamma^{h,m_{h}})$ is tight, which has a weakly convergent subsequence denoted by $(\boldsymbol{X},\boldsymbol{W},\gamma)$.
In addition, $\lim_{h\rightarrow0}m_{h}=m$ in $\mathcal{P}^2(\mathcal{Q})$.
\end{thm}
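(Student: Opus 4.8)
The plan is to prove tightness component by component using the local consistency conditions (\ref{eq:3-2}) and the growth bounds of Assumption \ref{assumption 1}, extract a weakly convergent subsequence by Prokhorov's theorem, and then exploit the defining near-fixed-point property of $m_h$ to identify the limit and conclude $\lim_{h\to0}m_h=m$. The first ingredient is a uniform moment estimate for the interpolated state process. Decomposing the one-step increment via (\ref{eq:3-2}) into a predictable drift part $\boldsymbol{b}(nh_2,\boldsymbol{X}^{h,m_h}_n,m^h_n,\boldsymbol{\alpha}^{h,m_h}_n)h_2+o(h_2)$ and a conditionally centred martingale part, and invoking the linear growth of $\boldsymbol{b}$ and the Lipschitz bound on $\boldsymbol{\sigma}$ from Assumption \ref{assumption 1}(1)--(2), a discrete Gronwall argument yields $\sup_h\ep\big[\sup_{0\le t\le T}\|\boldsymbol{X}^{h,m_h}_t\|^2\big]<\infty$, with the control contribution under control since the admissible values lie in the bounded set $\boldsymbol{U}$.

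With these bounds in hand, I would verify tightness through the criterion of \cite{KhDp:01}: it suffices to check that $\lim_{\rho\to0}\limsup_{h\to0}\ep\big[\min\!\big(1,\|\boldsymbol{X}^{h,m_h}_{\tau+s}-\boldsymbol{X}^{h,m_h}_{\tau}\|^2\big)\big]=0$ uniformly over stopping times $\tau\le T$ and $s\le\rho$. The drift increment is $O(s)$ by the moment bound, while the martingale increment has conditional second moment $O(s)$ by the covariance consistency in (\ref{eq:3-2}), giving tightness of $\boldsymbol{X}^{h,m_h}$. The process $\boldsymbol{W}^{h,m_h}$ is, by its very construction, a martingale whose predictable quadratic variation converges to $\boldsymbol{I}\,t$ (the normalization by $\boldsymbol{\sigma}$ turns the covariance $\boldsymbol{a}h_2$ into $\boldsymbol{I}h_2$), so the same estimates give its tightness and identify its weak limit as a standard Brownian motion. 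The relaxed-control component $\gamma^{h,m_h}$ is automatically tight because the space of admissible relaxed controls on the compact set $\boldsymbol{U}\times[t,T]$ is compact in the weak topology. Joint tightness follows from marginal tightness, and Prokhorov's theorem supplies a weakly convergent subsequence $(\boldsymbol{X},\boldsymbol{W},\gamma)$.

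For the measure convergence, I would use the definition $m_h=m^{k_h}$, where $k_h$ is chosen so that $\mathcal{W}_2^2(\Phi^h(m_h),m_h)\le\frac{2q}{1-q}h_1^2$ (existence of $k_h$ as in \cite{BeBaCa:18}); hence $\mathcal{W}_2(\Phi^h(m_h),m_h)\to0$ as $h_1\to0$. Since $\Phi^h(m_h)=\mathbb{P}\circ(\hat{\boldsymbol{X}}^{h,m_h})^{-1}$ is exactly the law of the interpolated optimal-state process, the weak convergence established above gives $\Phi^h(m_h)\to\mathcal{L}(\boldsymbol{X})=:m$ along the subsequence, and the uniform moment bound (together with uniform integrability of the squared norms) upgrades this to convergence in $\mathcal{W}_2$. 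A triangle inequality $\mathcal{W}_2(m_h,m)\le\mathcal{W}_2(m_h,\Phi^h(m_h))+\mathcal{W}_2(\Phi^h(m_h),m)$ then yields $m_h\to m$ in $\mathcal{P}^2(\mathcal{Q})$.

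The hard part will be the uniform moment bound and the passage from weak to $\mathcal{W}_2$ convergence: the drift depends both on the mean-field argument $m^h_n$ and on the iteration-dependent optimal control generated by Construction \ref{construction 1}, so the Gronwall estimate must be carried out uniformly across the induced-measure iterates, and one must secure enough uniform integrability (e.g.\ a uniform bound on a moment slightly above order two) to transfer second moments to the limit. A secondary delicate point is that the limit $m$ must not depend on the chosen subsequence; here the bound $\mathcal{W}_2(\Phi^h(m_h),m_h)\to0$ pins $m$ down as a fixed point of the limiting induced map, and uniqueness of the MFG solution under Assumption \ref{assumption 1}(1)--(6), noted in the Remark following Assumption \ref{assumption 1}, removes the subsequence dependence.
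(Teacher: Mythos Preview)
Your proposal is correct and follows essentially the same route as the paper: tightness of $\gamma^{h,m_h}$ by compactness of the relaxed-control space, tightness of $\boldsymbol{X}^{h,m_h}$ and $\boldsymbol{W}^{h,m_h}$ via the second-moment increment bounds coming from local consistency (\ref{eq:3-2}), extraction of a weakly convergent subsequence, and then the near-fixed-point inequality $\mathcal{W}_2^2(\Phi^h(m_h),m_h)\le \frac{2q}{1-q}h_1^2$ combined with $\Phi^h(m_h)\to m$ to obtain $m_h\to m$. The only noteworthy differences are of emphasis: the paper carries out in more detail the identification of the limit, showing via the test-function/Skorohod argument that $\boldsymbol{W}(\cdot)$ is an $\mathcal{F}_t$-Wiener process and that $\boldsymbol{X}$ solves the limiting relaxed-control SDE (\ref{eq:5-2-2-1}), whereas you sketch this step; conversely, you make explicit two points the paper glosses over---the uniform integrability needed to upgrade weak convergence to $\mathcal{W}_2$-convergence of $\Phi^h(m_h)$, and the appeal to uniqueness of the MFG equilibrium (Assumption~\ref{assumption 1}) to remove the subsequence dependence of~$m$.
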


\begin{thm}\label{Theorem 6}
$v(t,\boldsymbol{x})$ and $v^{h,m_{h}}(t,\boldsymbol{x})$ are value functions defined
in (\ref{eq:2-1-1}) and (\ref{eq:5-2-1-2}), respectively, we have
$v^{h,m_{h}}(t,\boldsymbol{x})\rightarrow v(t,\boldsymbol{x}), \text{as}\;h\rightarrow0.$
\end{thm}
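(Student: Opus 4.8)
The plan is to establish the convergence by proving the two one-sided inequalities $\limsup_{h\to 0}v^{h,m_{h}}(t,\boldsymbol{x})\leq v(t,\boldsymbol{x})$ and $\liminf_{h\to 0}v^{h,m_{h}}(t,\boldsymbol{x})\geq v(t,\boldsymbol{x})$, following the martingale-problem framework of \cite{KhDp:01} adapted to the mean-field setting of \cite{BeBaCa:18}. First I would invoke Theorem \ref{Theorem 5} to pass to a weakly convergent subsequence of $(\boldsymbol{X}^{h,m_{h}},\boldsymbol{W}^{h,m_{h}},\gamma^{h,m_{h}})$ with limit $(\boldsymbol{X},\boldsymbol{W},\gamma)$, and apply the Skorokhod representation theorem so that the convergence may be taken almost surely on a common probability space. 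The local consistency conditions (\ref{eq:3-2}) then identify the limit: $\boldsymbol{W}$ is a standard Brownian motion and $(\boldsymbol{X},\gamma)$ solves the relaxed form of the state equation (\ref{eq:2-1-2}) driven by the limiting measure flow $m=\lim_{h\to 0}m_{h}$, whose existence is the second assertion of Theorem \ref{Theorem 5}.

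For the lower bound I would take the near-optimal admissible relaxed controls attaining $v^{h,m_{h}}(t,\boldsymbol{x})$ and extract the weakly convergent subsequence above. The growth conditions of Assumption \ref{assumption 1}(2) furnish uniform moment bounds on $\boldsymbol{X}^{h,m_{h}}$, so the discretized running and terminal costs are uniformly integrable; combined with the continuity of $f$ and $g$ in $(\boldsymbol{x},m,\boldsymbol{\alpha})$ and the Wasserstein convergence $\mathcal{W}_{2}(m_{n}^{h},m_{s})\to 0$, weak convergence passes through the cost functional to yield $\liminf_{h\to 0}v^{h,m_{h}}(t,\boldsymbol{x})\geq J(t,\boldsymbol{x},\gamma)\geq v(t,\boldsymbol{x})$, the final inequality holding because the limiting $\gamma$ is admissible for the continuous problem.

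For the upper bound I would start from an $\tilde{\eta}$-optimal relaxed control for the continuous problem and apply the chattering Lemma \ref{lemma1} to replace it by an ordinary control $\boldsymbol{\alpha}^{\eta}$ valued in a finite set, changing the cost by at most $\eta$ in the sense of (\ref{eq:5-2-1-3}). Using local consistency I would transfer $\boldsymbol{\alpha}^{\eta}$ onto the approximating chain and show, again via the continuity and moment estimates, that the discrete cost under this transferred control converges to $J(t,\boldsymbol{x},\gamma^{\eta})$. Since $v^{h,m_{h}}$ is the infimum over all admissible discrete controls, this gives $\limsup_{h\to 0}v^{h,m_{h}}(t,\boldsymbol{x})\leq v(t,\boldsymbol{x})+\eta+\tilde{\eta}$, and letting $\eta,\tilde{\eta}\to 0$ closes this direction. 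The two bounds together deliver the claimed limit.

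The hardest part will be controlling the measure coupling: because the drift, the running cost and the terminal cost all depend on the distribution, the weak convergence of the controlled state and the Wasserstein convergence $m_{h}\to m$ must be handled simultaneously, and one must verify that the error introduced by freezing $m_{n}^{h}$ along the chain vanishes uniformly in the limit. This is precisely where the stopping index $k_{h}$ and the small-time-interval construction of \cite{BeBaCa:18} enter, since they guarantee $\mathcal{W}_{2}^{2}(\Phi^{h}(m^{k_{h}}),m^{k_{h}})\leq\frac{2q}{1-q}h_{1}^{2}\to 0$, so that the fixed-point consistency of the mean-field interaction is recovered for the limiting process and the frozen-measure approximation is asymptotically exact.
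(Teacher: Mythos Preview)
Your proposal is correct and follows essentially the same route as the paper: both prove the two one-sided inequalities via Theorem \ref{Theorem 5}, Skorokhod representation, and weak convergence of the cost for the lower bound, and a chattering/transfer argument for the upper bound. The only cosmetic difference is that for the upper bound you invoke Lemma \ref{lemma1} directly and then ``transfer $\boldsymbol{\alpha}^{\eta}$ onto the approximating chain,'' whereas the paper unpacks this into an explicit three-step $\rho/3$ decomposition (near-optimal piecewise constant control $\hat{\gamma}^{\rho}$; reduction to a control $\gamma^{\rho,\varsigma}$ depending on finitely many Wiener samples via martingale convergence; then plugging the discrete Wiener process $\boldsymbol{W}^{h,m_{h}}$ into the same functional $\varLambda_{\kappa}^{\rho,\varsigma}$ to define the chain control), which is exactly the content of Lemma \ref{lemma1} spelled out.
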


\section{Numerical examples}\label{Numerical examples}
In this section, we will present two examples to illustrate the numerical method. The first example entails analytical solutions, as expounded in \cite{CrFjSl:13}, while the second delves into a 2-dimensional MFG scenario, extending upon the concepts outlined in \cite{BeBaCa:18}.
All methods are coded by Python with PyTorch package and run on a server with AMD EPYC 7T83 (64 cores) and RTX 4090 (24GB) GPU.
\subsection{Linear-quadratic MFGs with common noise}
Let $\left(W_{t}\right)_{0\leq t\leq T}$
and $\left(W_{t}^{0}\right)_{0\leq t\leq T}$ be independent Brownian
motions, which are referred as the idiosyncratic noise and common
noise, respectively. We consider linear-quadratic MFGs with common noise as follows,
\begin{equation*}
\setlength{\abovedisplayskip}{3pt}
\setlength{\belowdisplayskip}{3pt}
\begin{aligned} & \inf_{\alpha}\ep \Big\{
\int_{0}^{T}
\Big[\frac{\alpha_{t}^{2}}{2}-q\alpha_{t}
(u_{t}-X_{t})
+\frac{\epsilon}{2}
(u_{t}-X_{t}
)^{2}
\Big]dt
\\
 & \hspace{2in} 
 +\frac{c}{2}\left(u_{T}-X_{T}\right)^{2}
 \Big\},
\end{aligned}
\end{equation*}
where $X_{0}
\sim\mu_{0}$ and
\begin{equation*}
\setlength{\abovedisplayskip}{3pt}
\setlength{\belowdisplayskip}{3pt}
\begin{aligned}
\mathrm{d}X_{t} & =[a(u_{t}-X_{t})+\alpha_{t}]\mathrm{d}t+\varSigma(\rho
\mathrm{d}W_{t}^{0}+\sqrt{1-\rho^{2}}dW_{t}).
\end{aligned}
\end{equation*}
Here $u_{t}=\ep[X_{t}\mid\mathcal{F}_{t}^{W^{0}}]$ is the conditional population mean given the common noise. Then, at
equilibrium, we have
\begin{equation*}
\setlength{\abovedisplayskip}{3pt}
\setlength{\belowdisplayskip}{3pt}
\begin{aligned}
& u_{t}:=\ep\left[X_{0}\right]+\rho\varSigma W_{t}^{0},\quad t\in[0,T],\\
 & \alpha_{t}=\left(q+\eta_{t}\right)\left(u_{t}-X_{t}\right),\quad t\in[0,T],
\end{aligned}
\end{equation*}
where $\eta_{t}$ is a deterministic function of time solving the Riccati equation,
\begin{equation*}
\setlength{\abovedisplayskip}{3pt}
\setlength{\belowdisplayskip}{3pt}
\dot{\eta}_{t}=2(a+q)\eta_{t}+\eta_{t}^{2}-(\epsilon-q^{2}),\quad\eta_{T}=c,
\end{equation*}
with the solution given by
\begin{equation*}
\setlength{\abovedisplayskip}{3pt}
\setlength{\belowdisplayskip}{3pt}
\begin{aligned}
\eta_{t}= & \frac{-\left(\epsilon-q^{2}\right)\left(e^{\left(\delta^{+}-\delta^{-}\right)(T-t)}-1\right)}{\left(\delta^{-}e^{\left(\delta^{+}-\delta^{-}\right)(T-t)}-\delta^{+}\right)-c\left(e^{\left(\delta^{+}-\delta^{-}\right)(T-t)}-1\right)}\\
- & \frac{c\left(\delta^{+}e^{\left(\delta^{+}-\delta^{-}\right)(T-t)}-\delta^{-}\right)}{\left(\delta^{-}e^{\left(\delta^{+}-\delta^{-}\right)(T-t)}-\delta^{+}\right)-c\left(e^{\left(\delta^{+}-\delta^{-}\right)(T-t)}-1\right)}.
\end{aligned}
\end{equation*}
Here $\delta^{\pm}=-(a+q)\pm\sqrt{R},R=(a+q)^{2}+\left(\epsilon-q^{2}\right)>0$,
and the minimal expected cost for a representative player is $v\left(0,x_{0}-\ep[x_{0}]\right)$
with
\begin{equation*}
\setlength{\abovedisplayskip}{3pt}
\setlength{\belowdisplayskip}{3pt}
v(t, x)=\frac{\eta_{t}}{2}x^{2}+\frac{1}{2}\varSigma^{2}\left(1-\rho^{2}\right)\int_{t}^{T}\eta_{s}\mathrm{d}s.
\end{equation*}
In this experiment, the specified parameter values are configured as stated below (see \cite{HjHr:20}),
\begin{equation*}
\setlength{\abovedisplayskip}{3pt}
\setlength{\belowdisplayskip}{3pt}
\begin{array}{lccccccc}
\hline \text { Parameter } & {a} & {q} & {c} & {\epsilon} & {\rho} & {\varSigma} & {T}\\
\hline \text { Value } & 0.1 & 0.1 & 0.5 & 0.5 & 0.2 & 1 & 1 \\
\hline
\end{array}
\end{equation*}
Here, we partition the time interval $[0, T]$ into $100$ equal segments, i.e., $t_k=\frac{k}{100}, k=0,1, \ldots, 100$. The initial states are independently generated as $X_0$ follows the uniform distribution $\mu_0 = U(0, 1)$.

Figure \ref{fig:trajectories} \subref{fig:Xt}-\subref{fig:alpha} presents three sample paths (represented by RGB: Red, Green, Blue) for the optimal state process $X_{t}$, the conditional mean $u_{t}$ and the optimal control $\alpha_{t}$ vs. their approximations $\hat{X}_{t}$, $\hat{u}_{t}$, $\hat{\alpha}_{t}$ (represented by dashed lines with circles) provided by our numerical algorithm. Obviously, all the state (Figure \ref{fig:trajectories} \subref{fig:Xt}), mean-field interaction (Figure \ref{fig:trajectories} \subref{fig:ut}) and control (Figure \ref{fig:trajectories} \subref{fig:alpha}) trajectories are aligned with the analytical solution (represented by solid lines).

\begin{figure}[http] 	\centering
	\vspace{-0.3cm}
	\subfigtopskip=2pt
	\subfigbottomskip=2pt
	\subfigcapskip=-5pt
	\subfigure[$X_{t}$]{
		\label{fig:Xt}
		\includegraphics[width=0.45\linewidth]{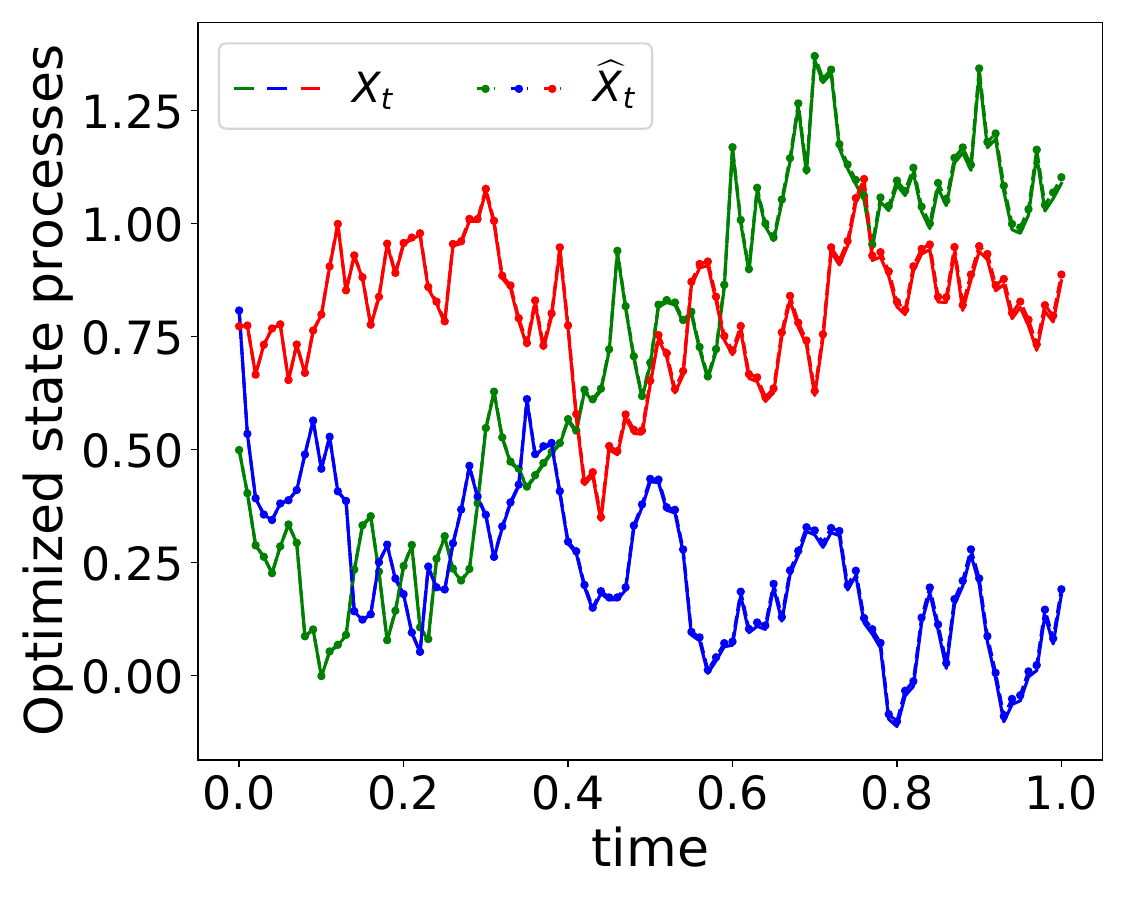}}
		\subfigure[$u_{t}$]{
		\label{fig:ut}
		\includegraphics[width=0.45\linewidth]{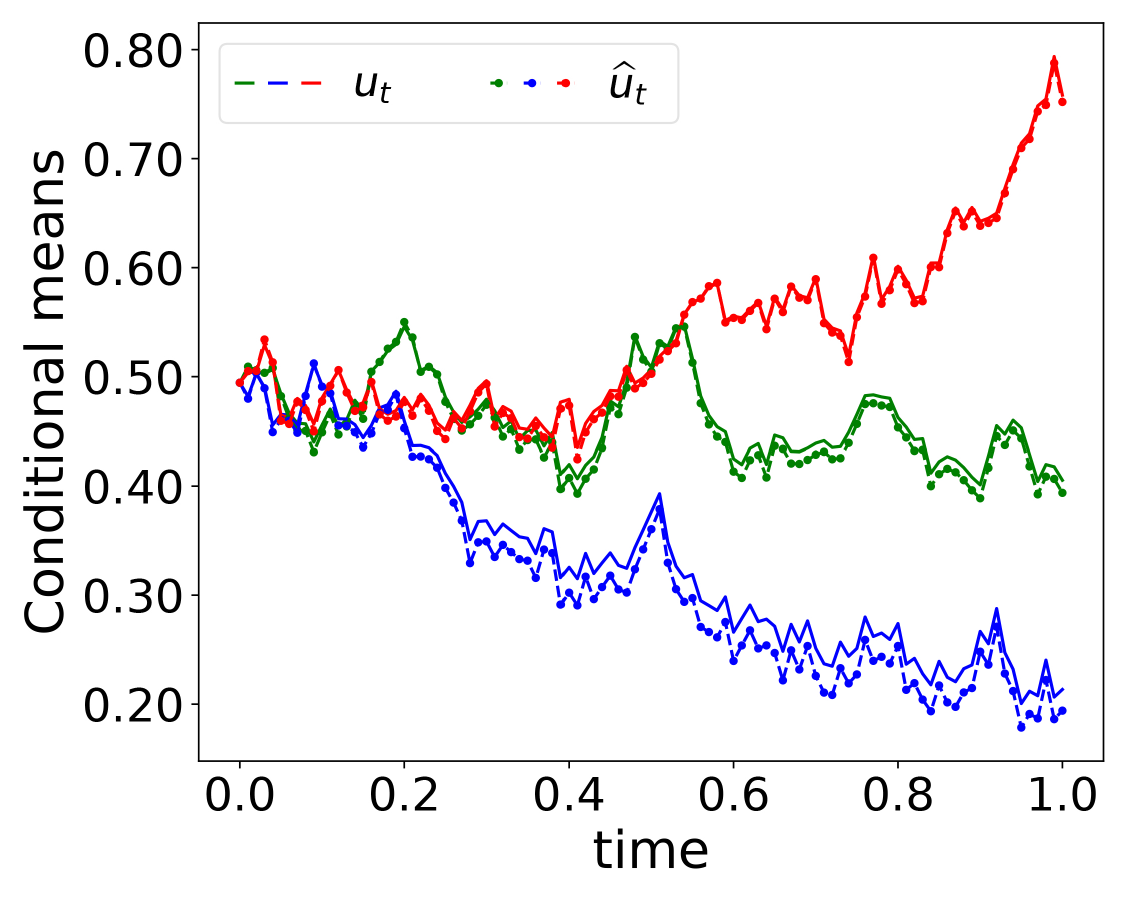}}
	\subfigure[$\alpha_{t}$]{
		\label{fig:alpha}
		\includegraphics[width=0.45\linewidth]{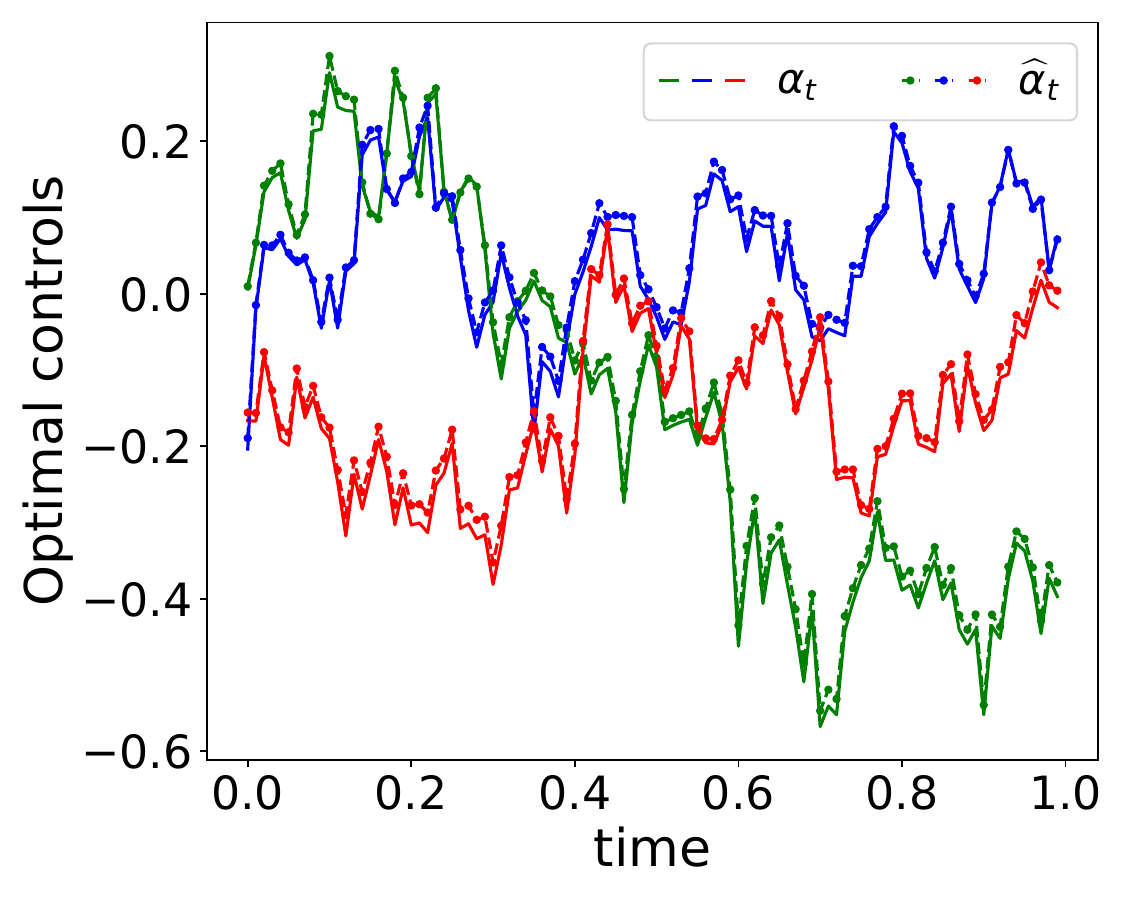}}
		\caption{Figures (a), (b) and (c) give three trajectories of $X_{t}$, $u_{t}$ and $\alpha_{t}$ (solid lines) and their approximations (circles).}
		\label{fig:trajectories}
\end{figure}

%

\subsection{$2$-dimensional MFGs}
Suppose that the state variable $\boldsymbol{x}=(x_{1},x_{2})^{\top}\in\mathcal{Q}$,
and the control variable $\boldsymbol{\alpha}=(\alpha_{1},\alpha_{2})^{\top}\in\boldsymbol{U}$.
We first give the parameters of deep learning, such as the computation precision $\epsilon$ and maximal numbers
of learning times for locating the initial value of $\theta$, for
determining iterative control strategy, and for iteratively updating
value function $v_{k}$. The sets of this experiment are as follows (see \cite{CxJzYh:20}):
\begin{equation*}
\setlength{\abovedisplayskip}{3pt}
\setlength{\belowdisplayskip}{3pt}	
\begin{array}{lcc}
\hline & \text { Triggering Error } & \text { Max \# of steps } \\
\hline \text { Control Fit } & 10^{-3} & 10000 \\
\text { Gradient Descent } & 10^{-5} & 5000 \\
\text { Global Iteration } & 10^{-6} & 50000 \\
\hline
\end{array}
\end{equation*}
Next, we set the parameters related to the MCAM method (see \cite{BeBaCa:18}) as follows: $h_{1}=0.2$, $h_{2}=0.01$, $T=1$, $\varSigma=0.5$,
$\mathcal{Q}=[0,1]^{2}$, and $\boldsymbol{U}=[0,1.5]^{2}$.
To be simplicity, we assume that the state equation does not involve the mean-field interaction, and the form of the drift $\boldsymbol{b}$ is given by: $\boldsymbol{b}(t,\boldsymbol{x},u,\boldsymbol{\alpha})=2\boldsymbol{x}-\boldsymbol{\alpha}$. Meanwhile, assuming that the mean-field interaction only appears in the objective function, and the form of the running and terminal cost functions are
\begin{equation*}
\setlength{\abovedisplayskip}{3pt}
\setlength{\belowdisplayskip}{3pt}
\begin{aligned}
f(t,\boldsymbol{x},u,\boldsymbol{\alpha}) & =\Vert 4\boldsymbol{x}-5\bar{u}\Vert ^{2}+\Vert \boldsymbol{\alpha}\Vert ^{2},\\
g(\boldsymbol{x},u) & =\Vert 4\boldsymbol{x}-5\bar{u}\Vert ^{2},
\end{aligned}
\end{equation*}
where $\bar{u}$ is the mean of $u$.

In this set of experiments, we choose the initial state of the MFGs is taken to be $\boldsymbol{x}_{0}=(0.4,0.4)^{\top}$, and assume that agents are initially distributed according to
\begin{equation*}
\setlength{\abovedisplayskip}{3pt}
\setlength{\belowdisplayskip}{3pt}
u_{0}(\boldsymbol{x})\propto\exp(-\frac{1}{2}(\mathbf{x}-\boldsymbol{\mu})^{\mathrm{T}}\boldsymbol{\varSigma}^{-1}(\mathbf{x}-\boldsymbol{\mu})),\;\boldsymbol{x}\in\mathbb{R}^{2},
\end{equation*}
where $\boldsymbol{\mu}=(0,1)\in\mathbb{R}^{2}$, and $\boldsymbol{\varSigma}=0.25\mathbb{I}_{2\times2}$, where $\mathbb{I}_{2\times2}$ is a $2$-dimensional identity matrix.

Fix $t=0.5$, the relationship between the value function $v\left(0.5, \boldsymbol{x}\right)$ and the state $\boldsymbol{x}=\left(x_1, x_2\right)$ is shown in Figure \ref{fig:u}. When $x_{2}$ is fixed, be it $0$, $0.5$ or $1$, the value function $v(0.5, x_1)$ increases as the value of $x_{1}$ increases (see Figure \ref{fig:u_alpha_x12} \subref{fig1}). The same positive relation between the value function $v(0.5, x_2)$ when $x_{1}$ is fixed also applies (see Figure \ref{fig:u_alpha_x12} \subref{fig2}). Figure \ref{fig:u_alpha_x12} \subref{fig3}-\subref{fig4} shows that the relationship between optimal controls and the state $\boldsymbol{x}$. In Figure \ref{fig:w}, shortly after $t=0$, the means tend to increase w.r.t. the time. The reason for this is that the value of optimal controls are positive numbers so that the process $X$ is elevated until reaching the upper limit of $1$. Regarding the optimal control curve's values in Figure \ref{fig:u_alpha_x12}, a higher value of $\alpha$ leads to faster attainment of the maximum mean value in Figure \ref{fig:w}.
\begin{figure}[htbp]
\vspace{-0.3cm}
\centerline{\includegraphics[width=0.7\linewidth]{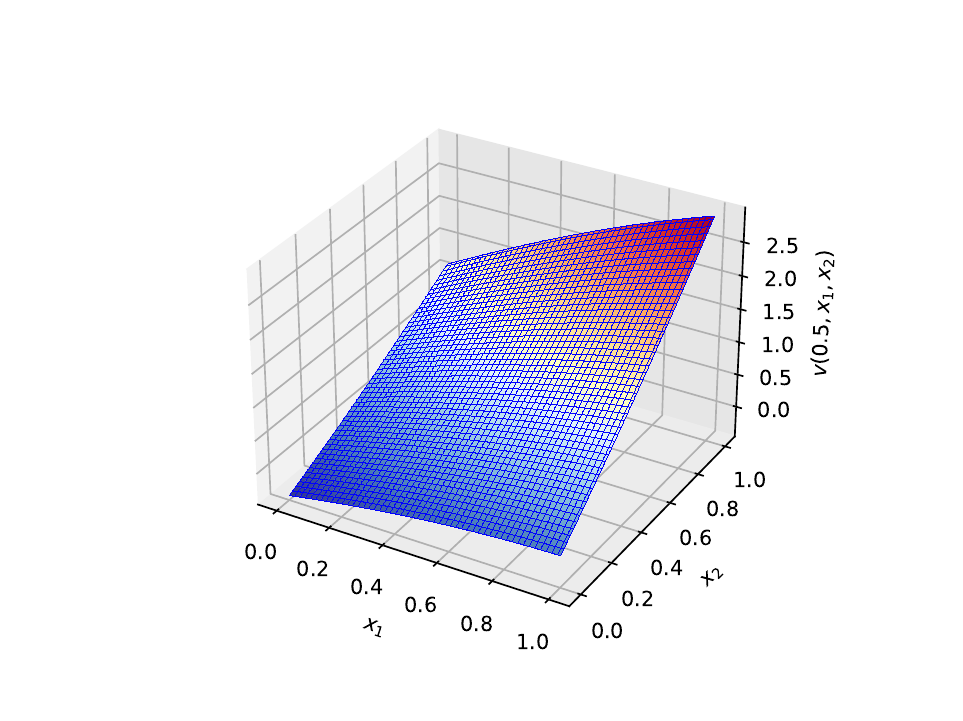}}
\caption{Value function vs. $x_{1}$ and $x_{2}$.}
\label{fig:u}
\end{figure}

\begin{figure}[http] 	\centering
	\vspace{-0.3cm}
	\subfigtopskip=2pt
	\subfigbottomskip=2pt
	\subfigcapskip=-5pt
	\subfigure[]{
		\label{fig1}
		\includegraphics[width=0.45\linewidth]{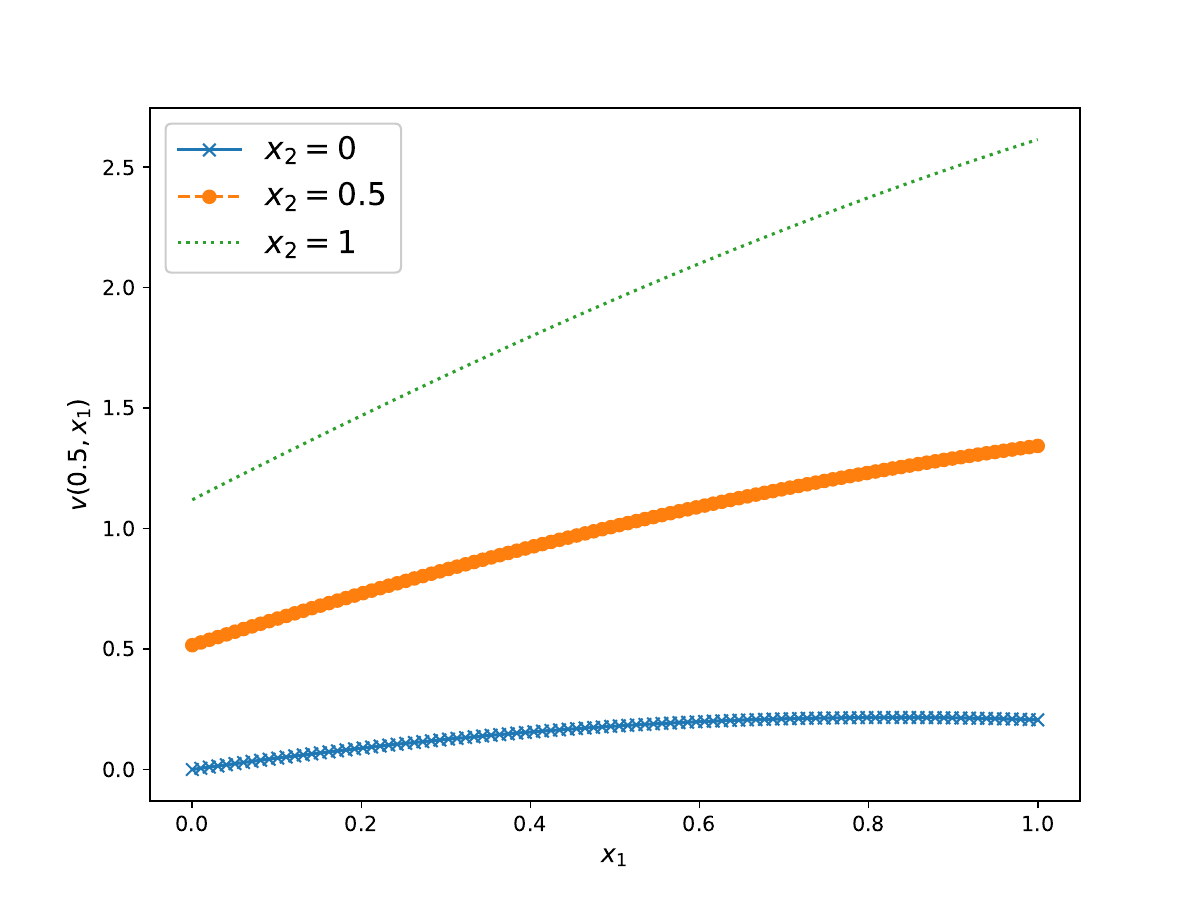}}
	\hspace{-4mm}
	\subfigure[]{
		\label{fig2}
		\includegraphics[width=0.45\linewidth]{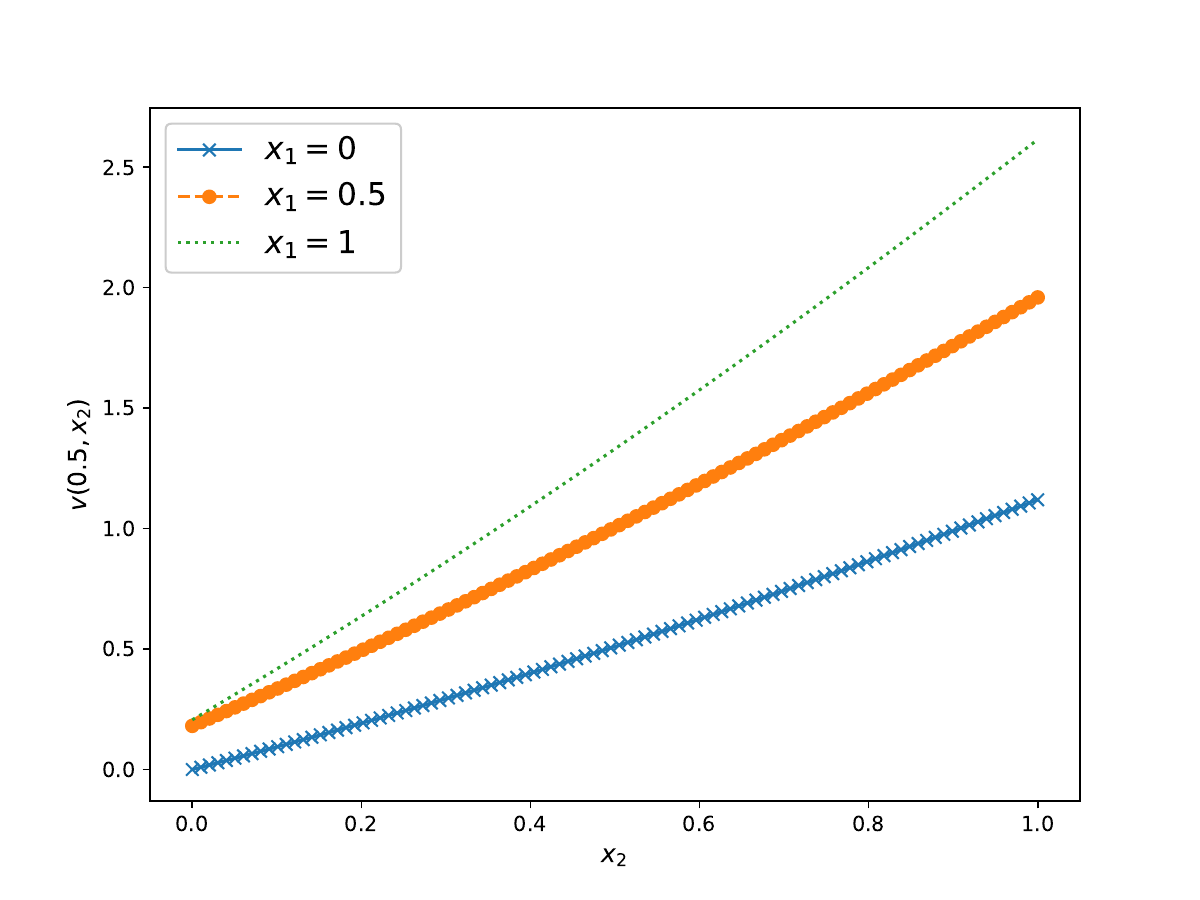}}
		\subfigure[]{
		\label{fig3}
		\includegraphics[width=0.45\linewidth]{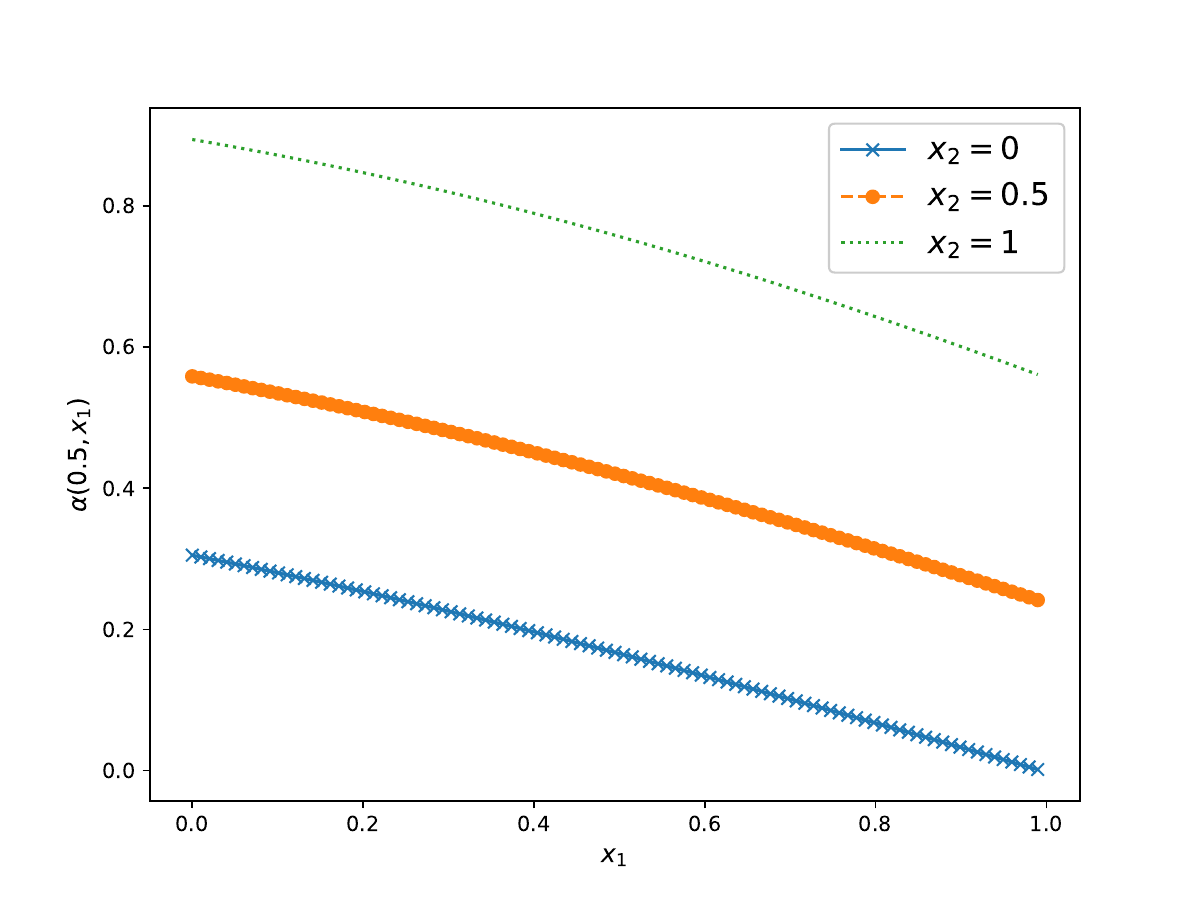}}
	\hspace{-4mm}
	\subfigure[]{
		\label{fig4}
		\includegraphics[width=0.45\linewidth]{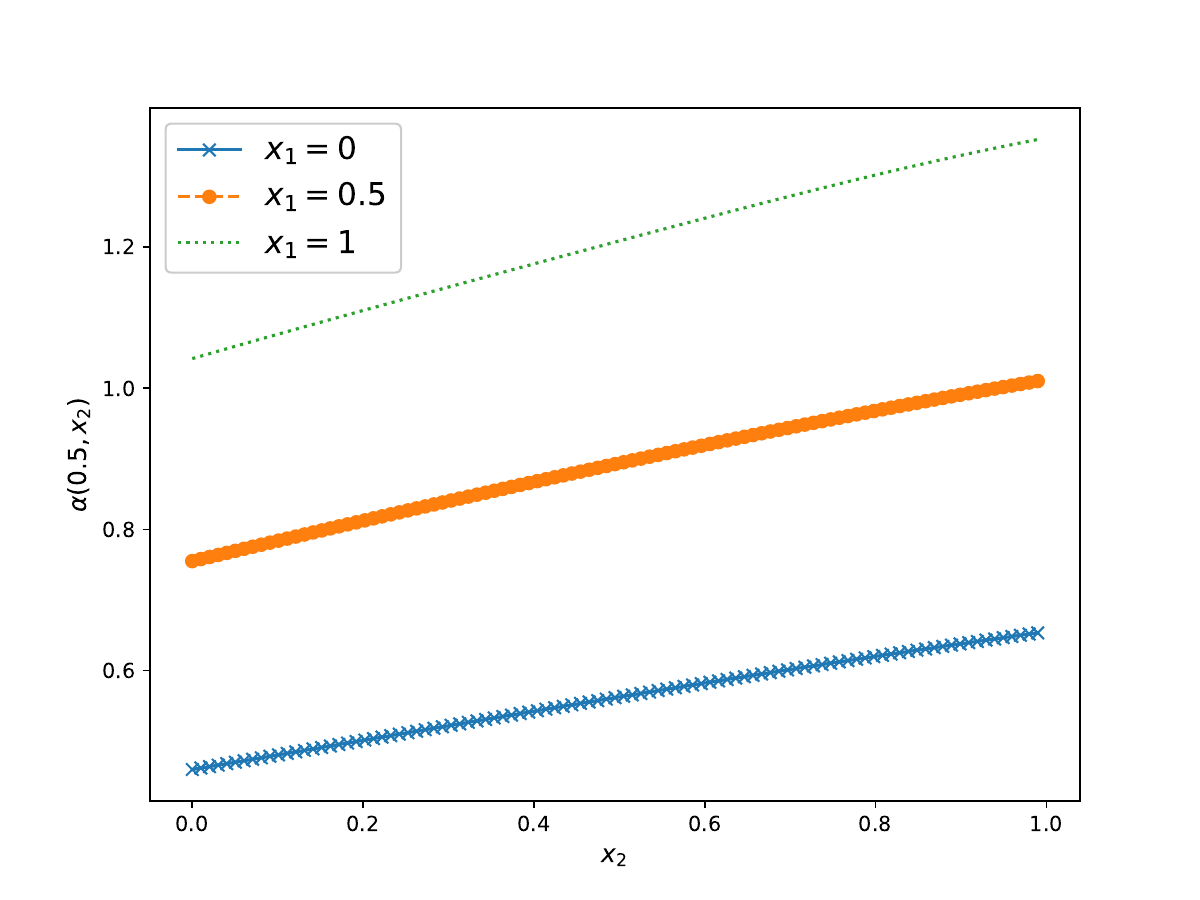}}
		\caption{The optimal control and value function.}
		\label{fig:u_alpha_x12}
\end{figure}

\begin{figure}[http] 	\centering
	\vspace{-0.3cm}
	\subfigtopskip=2pt
	\subfigbottomskip=2pt
	\subfigcapskip=-5pt
	\subfigure[]{
		\label{fig5}
		\includegraphics[width=0.45\linewidth]{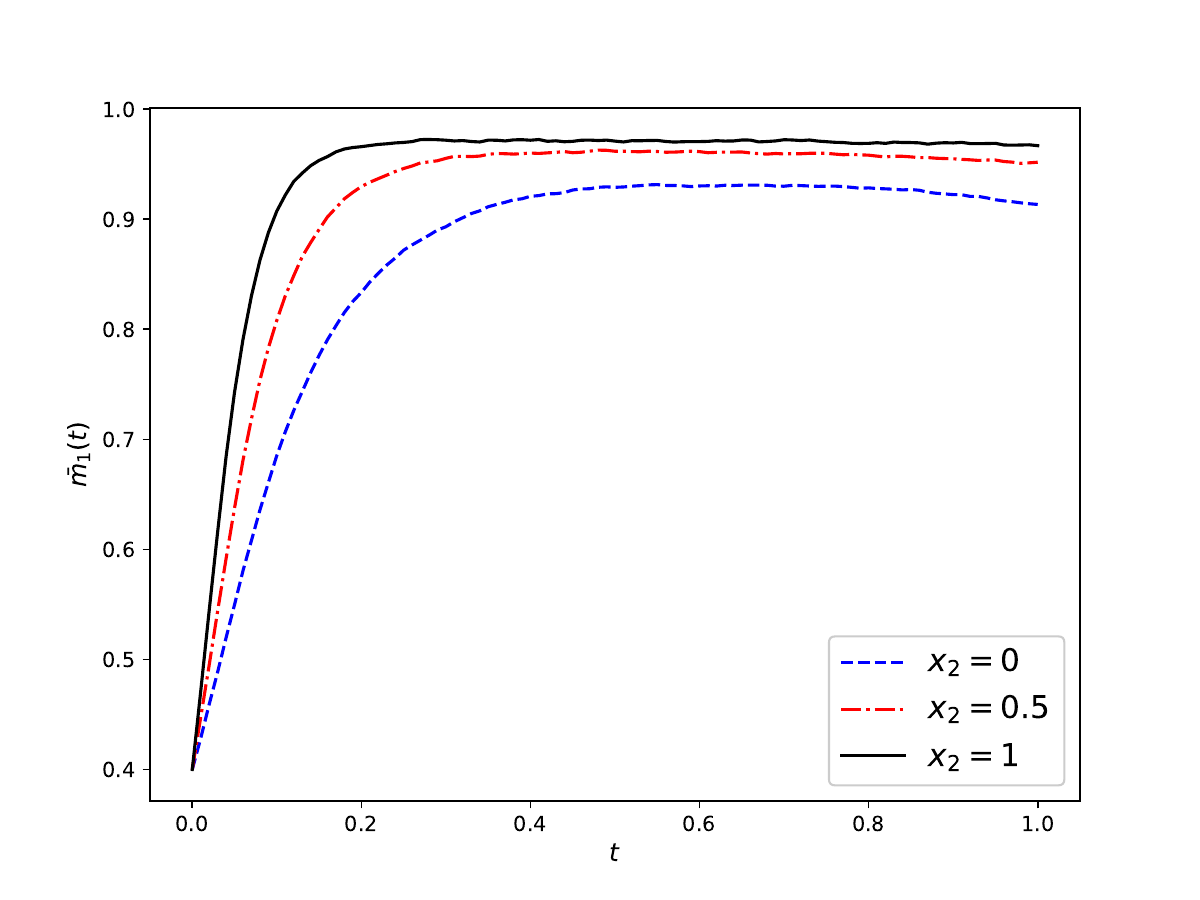}}
	\hspace{-4mm}
	\subfigure[]{
		\label{fig6}
		\includegraphics[width=0.45\linewidth]{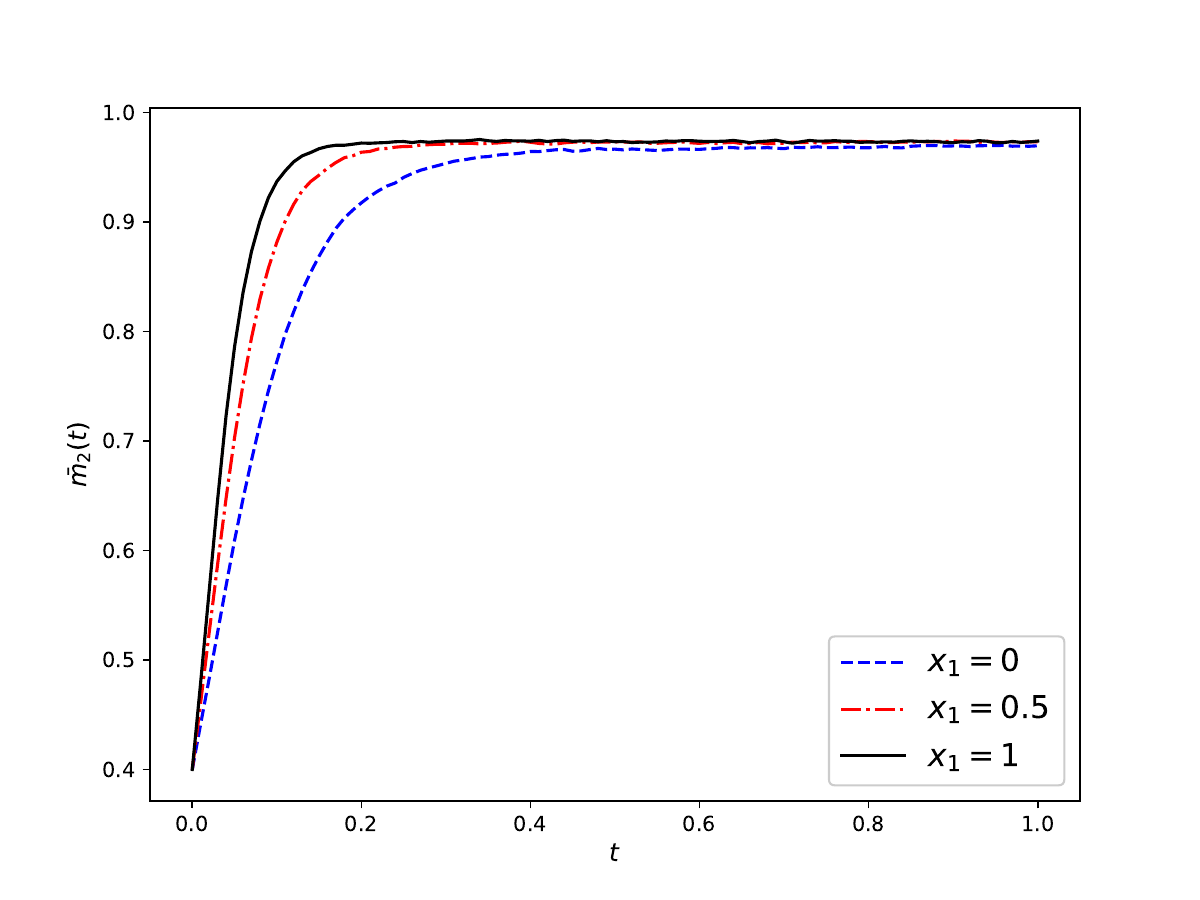}}
	\caption{The means $\bar{u}(t)$ given values $x_{1}$ and $x_{2}$.}
	\label{fig:w}
\end{figure}

\section{Conclusion}\label{Conclusion}
This paper developed a hybrid Markov chain approximation-based deep learning algorithm to solve the finite-horizon MFGs. The induced measure is adopted to build an iterative mean-field interaction in MCAM, whereas Monte-Carlo algorithm is 
used  for the approximation of the mean-field interaction.
The convergence of the hybrid algorithm is established
and two numerical examples are used to demonstrate the performance of our algorithm.

On one hand, the approximating piecewise constant controls
obtained by MCAM method depend on the density of the control grid, which determines the accuracy of the controls but is limited by the computational capability. On the other hand, the optimal control obtained through the neural network may be a solution that is locally optimal. To address this, our proposed method combines the advantages of the aforementioned two methods to find more accurate control values. In addition, our approach can 
overcome the difficulty of
curse of dimensionality
 that existing numerical methods are suffering.

In the future studies, the problem of MFC can be further addressed by using our proposed algorithm in this paper. Moreover, given the fact that the efficiency of Monte-Carlo algorithm remains relatively low, alternative approaches may be further developed. Apart from the general recursion that $\theta_{k+1}=\theta_k+\rho \nabla G\left(\theta_k, \epsilon_k\right)$ has been widely used in SA algorithm, we may also propose a modified recursion to adaptively approximate the optimal learning rate in our hybrid algorithm; see \cite{QhYgZq:22} for related work.

\begin{ack}                               
The research of Jiaqin Wei was supported in part by the National Natural Science
Foundation of China (12071146). 
\end{ack}

\appendix
\section{Proof of Equation (\ref{eq:3-3})}\label{Eq:3-3}
\begin{proof}
To prove equation (\ref{eq:3-3}), the results obtained from the finite difference approach are substituted into equation (\ref{eq:2-3-1}), resulting in the derived expression. Note that the variables $t, \boldsymbol{x}, m$, and $\hat{\boldsymbol{\alpha}}$ associated with $b^{i \pm}$ are omitted for brevity.
\begin{equation*}
\setlength{\abovedisplayskip}{3pt}
\setlength{\belowdisplayskip}{3pt}
\begin{aligned} & \frac{v(nh_{2}+h_{2},\boldsymbol{x})-v(nh_{2},\boldsymbol{x})}{h_{2}}\\
 & +\sum_{i}\left[b^{i+}\frac{v(nh_{2}+h_{2},\boldsymbol{x}+h_{1}\boldsymbol{e}_{i})-v(nh_{2}+h_{2},\boldsymbol{x})}{h_{1}}\right.\\
 & \quad\left.-b^{i-}\frac{v(nh_{2}+h_{2},\boldsymbol{x})-v(nh_{2}+h_{2},\boldsymbol{x}-h_{1}\boldsymbol{e}_{i})}{h_{1}}\right]\\
 & +\frac{1}{2}\sum_{i}\sum_{j\neq i}a_{ij}^{+}\left[\frac{2v(nh_{2}+h_{2},\boldsymbol{x})}{2h_{1}^{2}}\right.\\
 & \quad+\frac{v(nh_{2}+h_{2},\boldsymbol{x}+h_{1}\boldsymbol{e}_{i}+h_{1}\boldsymbol{e}_{j})}{2h_{1}^{2}}\\
 & \quad-\frac{v(nh_{2}+h_{2},x+h_{1}\mathbf{e}_{j})+v(nh_{2}+h_{2},x-h_{1}\mathbf{e}_{j})}{2h_{1}^{2}}\\
 & \quad+\frac{v(nh_{2}+h_{2},\boldsymbol{x}-h_{1}\mathbf{e}_{i}-h_{1}\mathbf{e}_{j})}{2h_{1}^{2}}\\
 & \quad\left.-\frac{v(nh_{2}+h_{2},\boldsymbol{x}+h_{1}\mathbf{e}_{i})+v(nh_{2}+h_{2},x-h_{1}\mathbf{e}_{i})}{2h_{1}^{2}}\right]\\
 & -\frac{1}{2}\sum_{i}\sum_{j\neq i}a_{ij}^{-}\left[-\frac{2v(nh_{2}+h_{2},\boldsymbol{x})}{2h_{1}^{2}}\right.\\
 & \quad+\frac{v(nh_{2}+h_{2},\boldsymbol{x}+h_{1}\boldsymbol{e}_{i}-h_{1}\boldsymbol{e}_{j})}{2h_{1}^{2}}\\
 & \quad+\frac{v(nh_{2}+h_{2},x+h_{1}\mathbf{e}_{j})+v(nh_{2}+h_{2},x-h_{1}\mathbf{e}_{j})}{2h_{1}^{2}}\\
 & \quad+\frac{v(nh_{2}+h_{2},\boldsymbol{x}+h_{1}\mathbf{e}_{i})+v(nh_{2}+h_{2},x-h_{1}\mathbf{e}_{i})}{2h_{1}^{2}}\\
 & \quad\left.-\frac{v(nh_{2}+h_{2},\boldsymbol{x}-h_{1}\mathbf{e}_{i}+h_{1}\mathbf{e}_{j})}{2h_{1}^{2}}\right]\\
 & +\frac{1}{2}\sum_{i}a_{ii}\left[\frac{v(nh_{2}+h_{2},\boldsymbol{x}+h_{1}\boldsymbol{e}_{i})}{h_{1}^{2}}\right.\\
 & \quad\left.+\frac{v(nh_{2}+h_{2},\boldsymbol{x}-h_{1}\boldsymbol{e}_{i})-2v(nh_{2}+h_{2},\boldsymbol{x})}{h_{1}^{2}}\right]\\
 & +f(nh_{2}+h_{2},\boldsymbol{x},m,\hat{\boldsymbol{\alpha}})\\
= & 0.
\end{aligned}
\end{equation*}
By simplifying and rearranging the terms of the above equation, we can get
\begin{equation*}
\setlength{\abovedisplayskip}{3pt}
\setlength{\belowdisplayskip}{3pt}
\begin{aligned}\\
 & v(nh_{2},\boldsymbol{x})=v(nh_{2}+h_{2},\boldsymbol{x})\left(1-\sum_{i}\frac{\left(a_{ii}+|b_{i}|h_{1}\right)h_{2}}{h_{1}^{2}}\right)\\
+ & \sum_{i}v(nh_{2}+h_{2},\boldsymbol{x}\pm h_{1}\boldsymbol{e}_{i})\frac{(a_{ii}/2-\sum_{j\neq i}|a_{ij}|/2+b_{i}^{\pm}h_{1})h_{2}}{h_{1}^{2}}\\
+ & \sum_{i}\sum_{j\neq i}v(nh_{2}+h_{2},\boldsymbol{x}+h_{1}\boldsymbol{e}_{i}\pm h_{1}\boldsymbol{e}_{j})\frac{a_{ij}^{\pm}h_{2}}{2h_{1}^{2}}\\
+ & \sum_{i}\sum_{j\neq i}v(nh_{2}+h_{2},\boldsymbol{x}-h_{1}\boldsymbol{e}_{i}\pm h_{1}\boldsymbol{e}_{j})\frac{a_{ij}^{\pm}h_{2}}{2h_{1}^{2}}\\
+ & f(nh_{2}+h_{2},\boldsymbol{x},m,\hat{\boldsymbol{\alpha}})
\end{aligned}
\end{equation*}
By analyzing the iterative formula presented above, we can derive the transition probabilities (\ref{eq:3-3}) for the Markov chains utilized in MFGs.
\end{proof}
\section{Proof of Lemma \ref{lemma1}} \label{Chattering Lemma}
\begin{proof}
We divide the proof into two parts as follows.

Part 1. For any $\eta>0$, there are $\varepsilon>0$, a finite set
$\mathcal{A}^{\eta}\subset\mathcal{A}$, and a probability space on
which are defined a solution in the stochastic differential equation
in (\ref{eq:2-1-2}). Thus, we have $\left(\boldsymbol{X}^{\eta}(\cdot),\boldsymbol{\alpha}^{\eta}(\cdot),\boldsymbol{W}^{\eta}(\cdot)\right)$,
where $\boldsymbol{\alpha}^{\eta}(\cdot)$ is $\mathcal{A}^{\eta}$-valued
and constant on $[n\varepsilon,n\varepsilon+\varepsilon)$. Moreover,
$\left(\boldsymbol{X}^{\eta}(\cdot),\gamma^{\eta}(\cdot),\boldsymbol{W}^{\eta}(\cdot)\right)$
converges weakly to $\left(\boldsymbol{X}(\cdot),\gamma(\cdot),\boldsymbol{W}(\cdot)\right)$,
which further implies that 
\begin{equation*}	
\limsup_{\eta}\left|J(t,\boldsymbol{x},\gamma^{\eta})-J(t,\boldsymbol{x},\gamma)\right|\leq\varepsilon_{\tilde{\eta}},	
\end{equation*}
with $\varepsilon_{\tilde{\eta}}$ satisfying $\varepsilon_{\tilde{\eta}}\rightarrow0$
as $\tilde{\eta}\rightarrow0$.

Part 2. Consider a $\boldsymbol{\alpha}^{\eta}(\cdot)$ as in Part
1 above for $\eta$ sufficiently small. Let $0<\vartheta<\varepsilon$.
For $\boldsymbol{\pi}\in\mathcal{A}^{\eta}$, define the function
$F_{n,\vartheta}$ as the regular conditional probability 
\begin{equation*}
\begin{aligned}
 & F_{n,\vartheta}\left(\boldsymbol{\pi};\boldsymbol{\alpha}^{\eta}(i\varepsilon),i<n,\boldsymbol{W}^{\eta}(p\vartheta),p\vartheta\leq n\varepsilon\right)\\
= & \pr\left(\boldsymbol{\alpha}^{\eta}(n\varepsilon)=\boldsymbol{\pi}\mid\boldsymbol{\alpha}^{\eta}(i\varepsilon),i<n,\boldsymbol{W}^{\eta}(p\vartheta),p\vartheta\leq n\varepsilon\right).
\end{aligned}
\end{equation*}
The uniqueness of the solution of the state equation or the associated
martingale problem implies that the law of $\left(\boldsymbol{X}^{\eta},\gamma^{\eta}(\cdot),\boldsymbol{W}^{\eta}(\cdot)\right)$
is determined by the law of $\left(\gamma^{\eta}(\cdot),\boldsymbol{W}^{\eta}(\cdot)\right)$.
Since the $\sigma$-algebra determined by $\left\{ \boldsymbol{\alpha}^{\eta}(i\varepsilon),i<n,\boldsymbol{W}^{\eta}(p\vartheta),p\vartheta\leq n\varepsilon\right\} $
increases to the $\sigma$-algebra determined by $\left\{ \boldsymbol{\alpha}^{\eta}(i\varepsilon),i<n,\boldsymbol{W}^{\eta}(\tau),\tau\leq n\varepsilon\right\} $
as $\vartheta\rightarrow0$, we can show that, for each $n,\boldsymbol{\pi}$,
and $\varepsilon$, 
\begin{equation}
\begin{aligned}	
 & F_{n,\vartheta}\left(\boldsymbol{\pi};\boldsymbol{\alpha}^{\eta}(i\varepsilon),i<n,\boldsymbol{W}^{\eta}(p\vartheta),p\vartheta\leq n\varepsilon\right) \\
\rightarrow & \pr\left(\boldsymbol{\alpha}^{\eta}(n\varepsilon)=\boldsymbol{\pi}\mid\boldsymbol{\alpha}^{\eta}(i\varepsilon),i<n,\boldsymbol{W}^{\eta}(\tau),\tau\leq n\varepsilon\right)\label{eq:A.1}
\end{aligned}	
\end{equation}
with probability one as $\vartheta\rightarrow0$.

For $\boldsymbol{W}^{\eta,\vartheta}(\cdot)$, define the control
$\boldsymbol{\alpha}^{\eta,\vartheta}(\cdot)$ by the conditional
probability given in the first line of (\ref{eq:A.1}) with $\eta$
replaced by $\eta,\vartheta$. Owing to the construction of the control
law, as $\vartheta\rightarrow0,\left(\gamma^{\eta,\vartheta}(\cdot),\boldsymbol{W}^{\eta,\vartheta}(\cdot)\right)$
converges weakly to $\left(\gamma^{\eta}(\cdot),\boldsymbol{W}^{\eta}(\cdot)\right)$.
Thus, we can further show that $\left(\boldsymbol{X}^{\eta,\vartheta}(\cdot),\gamma^{\eta,\vartheta}(\cdot),\boldsymbol{W}^{\eta,\vartheta}(\cdot)\right)$
converges weakly to $\left(\boldsymbol{X}^{\eta}(\cdot),\gamma^{\eta}(\cdot),\boldsymbol{W}^{\eta}(\cdot)\right)$
as $\vartheta\rightarrow0$, and, moreover, $\boldsymbol{X}^{\eta,\vartheta}(\cdot)$
converges weakly to $\boldsymbol{X}(\cdot)$. Thus 
\begin{equation}
\left|J(t,\boldsymbol{x},\gamma^{\eta,\vartheta})-J(t,\boldsymbol{x},\gamma^{\eta})\right|\leq g_{1}(\vartheta),\label{eq:A.2}
\end{equation}
where $g_{1}(\vartheta)\rightarrow0$ as $\vartheta\rightarrow0$.

For $\Delta>0$, consider the ``mollified'' functions $F_{n,\vartheta,\Delta}(\cdot)$
given by
\begin{equation*}
\begin{aligned} 
& F_{n,\vartheta,\Delta}(\boldsymbol{\pi};\boldsymbol{\alpha}(i\varepsilon),i<n,\boldsymbol{W}(p\vartheta),p\vartheta\leq n\varepsilon)\\
= & N(\Delta)\int\cdots\int F_{n,\vartheta}\left(\boldsymbol{\pi};\boldsymbol{\alpha}(i\varepsilon),i<n,\boldsymbol{W}(p\vartheta)+\boldsymbol{z}_{p},p\vartheta\leq n\varepsilon\right)\\
 & \times\prod_{p}\exp\left(-\left\Vert \boldsymbol{z}_{p}\right\Vert ^{2}/(2\Delta)\right)\mathrm{d}\boldsymbol{z}_{p},
\end{aligned}
\end{equation*}
where $N(\Delta)$ is a normalizing constant so the integral of the
mollifier is unity. Note that $F_{n,\vartheta,\Delta}$ are nonnegative,
their values summing to unity, and they are continuous in the $\boldsymbol{W}$-variables.
As $\Delta\rightarrow0$, $F_{n,\vartheta,\Delta}$ converges to $F_{n,\vartheta}$
with probability one. 

Let $\boldsymbol{\alpha}^{\eta,\vartheta,\Delta}(\cdot)$ be the piecewise
constant admissible control that is determined by the conditional
probability distribution $F_{n,\vartheta,\Delta}(\cdot)$. There is
a probability space on which we can define $\boldsymbol{W}^{\eta,\vartheta,\Delta}(\cdot)$
and the control law $\boldsymbol{\alpha}^{\eta,\vartheta,\Delta}(\cdot)$
by the conditional probability 
\begin{equation*}
\begin{aligned}
 & \pr\left(\boldsymbol{\alpha}^{\eta,\vartheta,\Delta}(n\varepsilon)=\boldsymbol{\pi}\mid\boldsymbol{\alpha}^{\eta,\vartheta,\Delta}(i\varepsilon),i<n,\boldsymbol{W}^{\eta,\vartheta,\Delta}(\tau),\tau\leq n\varepsilon\right)\\
= & F_{n,\vartheta,\Delta}\left(\boldsymbol{\pi};\boldsymbol{\alpha}^{\eta,\vartheta,\Delta}(i\varepsilon),i<n,\boldsymbol{W}^{\eta,\vartheta,\Delta}(p\vartheta),p\vartheta\leq n\varepsilon\right).
\end{aligned}
\end{equation*}
Then the construction of the probability law of the controls, $\left(\boldsymbol{X}^{\eta,\vartheta,\Delta}(\cdot),\gamma^{\eta,\vartheta,\Delta}(\cdot),\boldsymbol{W}^{\eta,\vartheta,\Delta}(\cdot)\right)$
converges weakly to $\left(\boldsymbol{X}^{\eta,\vartheta}(\cdot),m^{\eta,\vartheta}(\cdot),\boldsymbol{W}^{\eta,\vartheta}(\cdot)\right)$
as $\Delta\rightarrow0$. This yields that 
\begin{equation}
\left|J(t,\boldsymbol{x},\gamma^{\eta,\vartheta,\Delta})-J(t,\boldsymbol{x},\gamma^{\eta,\vartheta})\right|\leq g_{2}(\Delta),\label{eq:A.3}
\end{equation}
where $g_{2}(\Delta)\rightarrow0$ as $\Delta\rightarrow0$.

Putting the above arguments together, and noting that $\tilde{\eta}$
can be chosen arbitrarily small. Then, for each $\eta>0$, there are
$\varepsilon>0,\vartheta>0,\boldsymbol{W}^{\eta}(\cdot)$, and an
admissible control that is piecewise constant on $[n\varepsilon,n\varepsilon+\varepsilon)$
taking values in a finite set $\mathcal{A}^{\eta}\subset\mathcal{A}$
determined by the conditional probability law 
\begin{equation*}
\begin{aligned} & \text{\pr\ensuremath{\left(\boldsymbol{\alpha}^{\eta}(n\varepsilon)=\boldsymbol{\pi}\mid\boldsymbol{\alpha}^{\eta}(i\varepsilon),i<n,\boldsymbol{W}^{\eta}(\tau),\tau\leq n\varepsilon\right)}}\\
= & F_{n}\left(\boldsymbol{\pi};\boldsymbol{\alpha}^{\eta}(i\varepsilon),i<n,\boldsymbol{W}^{\eta}(p\vartheta),p\vartheta\leq n\varepsilon\right),
\end{aligned}
\end{equation*}
where $F_{n}(\cdot)$ are continuous w.p. 1 in the $\boldsymbol{W}$-variables
for each of the other variables, and the proof is completed.
\end{proof}

\section{Proof of Theorem \ref{Theorem 5}}\label{Appendix 7.1}
\begin{proof}
Since $\gamma^{h,m_{h}}(\boldsymbol{U}\times[0,T])=T$
and $\boldsymbol{U}$ is compact, the sequence $\{\gamma^{h,m_{h}}\}_{h>0}$
is tight in $\mathscr{B}(\boldsymbol{U}\times[0,T])$. Here, we consider
the tightness of $\boldsymbol{X}^{h,m_{h}}$, for $t\leq s\leq T$,
\begin{equation*}
\begin{aligned} & \ep_{t}\Vert\boldsymbol{X}^{h,m_{h}}(s)-\boldsymbol{X}(s)\Vert^{2}\\
= & \ep_{t}\Big\Vert \int_{t}^{s}\int_{\boldsymbol{U}}\boldsymbol{b}(z,\boldsymbol{X}^{h,m_{h}}(z),m_{h},\boldsymbol{r})\gamma_{z}^{h,m_{h}}(d\boldsymbol{r})dz\\
 & +\int_{t}^{s}\boldsymbol{\sigma}(z,\boldsymbol{X}^{h,m_{h}}(z)) d\boldsymbol{W}(z)+\boldsymbol{\varepsilon}^{h}(s)\Big\Vert ^{2}\\
\leq & Ks^{2}+Ks+\varepsilon^{h}(s),
\end{aligned}
\end{equation*}
where $\limsup_{h\rightarrow0}\ep
|\varepsilon^{h}(s)|\rightarrow0$
and $K$ is a positive constant whose value may be different in different
context. Similarly, we can guarantee $\ep_{t}
\Vert \boldsymbol{X}^{h,m_{h}}(s+\delta)-\boldsymbol{X}^{h,m_{h}}(s)
\Vert ^{2}=O(\delta)+\varepsilon^{h}(\delta)$.
Therefore, the tightness of $\boldsymbol{X}^{h,m_{h}}$ follows. Using
the properties of local consistency, it can be argued (cf. the proof
of [\cite{KhDp:01}, Theorem 9.4.1]) that $\{\boldsymbol{B}^{h,m_{h}}\}_{h>0}$
is tight. Next, we show that the limit is the solution of SDEs driven $\gamma$.

Then by the tightness of $\boldsymbol{X}^{h,m_{h}}$ and $m_{h}$, we have
\begin{equation}
\begin{aligned}\boldsymbol{X}^{h,m_{h}}(s)= & \boldsymbol{x}+\int_{t}^{s}\int_{\boldsymbol{U}}\boldsymbol{b}(z,\boldsymbol{X}^{h,m_{h}}(z),m_{h},\boldsymbol{r})\gamma_{z}^{h,m_{h}}(d\boldsymbol{r})dz\\
 & +\int_{t}^{s}\boldsymbol{\sigma}(z,\boldsymbol{X}^{h,m_{h}}(z)) d\boldsymbol{W}^{h,m_{h}}+\boldsymbol{\varepsilon}^{h,\delta}(s),
\end{aligned}\label{eq:5-2-2-2}
\end{equation}
where $\lim_{\delta\rightarrow0}\limsup_{h\rightarrow0}\ep\Vert \boldsymbol{\varepsilon}^{h,\delta}(s)\Vert \rightarrow0$.

If we can verify $\boldsymbol{W}(\cdot)$ to be an $\mathcal{F}_t$-martingale, where $\mathcal{F}_t:=\sigma\{\boldsymbol{X}^{h,m_{h}}, \boldsymbol{W}^{h,m_{h}}, \gamma^{h,m_{h}}\}$, then (\ref{eq:5-2-2-2}) could be obtained by taking limits in (\ref{eq:5-2-2-2}).
Let $S(\cdot)$ be a real-valued and continuous function of its arguments with compact support, we have
\begin{equation*}
\begin{aligned}
 & \ep S\left(\boldsymbol{X}^{h,m_{h}}(nh_{2}),\boldsymbol{W}^{h,m_{h}}(nh_{2}),\gamma^{h,m_{h}}(nh_{2}),n\leq N\right)\\
 & \quad\times[\boldsymbol{W}^{h,m_{h}}(t+s)-\boldsymbol{W}^{h,m_{h}}(t)]=\boldsymbol{0}.
\end{aligned}
\end{equation*}
By using the Skorohod representation and the dominated
convergence theorem, letting $h \rightarrow 0$, we obtain
\begin{equation}
\begin{aligned}
 & \ep S\left(\boldsymbol{X}^{h,m_{h}}(nh_{2}),\boldsymbol{W}^{h,m_{h}}(nh_{2}),\gamma^{h,m_{h}}(nh_{2}),n\leq N]\right) \\
 & \quad\times[\boldsymbol{W}(t+s)-\boldsymbol{W}(t)]=\boldsymbol{0}.\label{eq:5-2-2-3}
\end{aligned}
\end{equation}
Since $\boldsymbol{W}(\cdot)$ has continuous sample paths, (\ref{eq:5-2-2-3}) implies that $\boldsymbol{W}(\cdot)$ is a continuous $\mathcal{F}_t$-martingale. On the other hand, since
\begin{equation}
\begin{aligned} & \ep[\Vert \boldsymbol{W}^{h,m_{h}}(t+h_{2})\Vert ^{2}-\Vert \boldsymbol{W}^{h,m_{h}}(t)\Vert ^{2}]\\
= & \ep[\Vert \boldsymbol{W}^{h,m_{h}}(t+h_{2})-\boldsymbol{W}^{h,m_{h}}(t)\Vert ^{2}]=h_{2},\label{eq:5-2-2-4}
\end{aligned}
\end{equation}
by using the Skorohod representation and the dominant convergence theorem together with (\ref{eq:5-2-2-4}), we have
\begin{equation*}
\begin{aligned} 
& \ep S\left(\boldsymbol{X}^{h,m_{h}}(nh_{2}),\boldsymbol{W}^{h,m_{h}}(nh_{2}),\gamma^{h,m_{h}}(nh_{2}),n\leq N\right)\\
 & \quad\times[\Vert \boldsymbol{W}^{h,m_{h}}(t+h_{2})\Vert ^{2}-\Vert \boldsymbol{W}^{h,m_{h}}(t)\Vert ^{2}-h_{2}]=0.
\end{aligned}
\end{equation*}
Then $\boldsymbol{W}(\cdot)$ is an $\mathcal{F}_t$-Wiener process.

We further assume that the probability space is chosen as required
by the Skorohod representation. Therefore, we can assume the sequence
\begin{equation*}
(\boldsymbol{X}^{h,m_{h}},\gamma^{h,m_{h}})\rightarrow(\boldsymbol{X},\gamma)\;\text{w.p.1}
\end{equation*}
with a slight abuse of notation. This leads to that as $h\rightarrow0$,
\begin{equation*}
\begin{aligned} & \ep\left\Vert \int_{t}^{s}\int_{\boldsymbol{U}}\boldsymbol{b}(z,\boldsymbol{X}^{h,m_{h}}(z),m_{h},\boldsymbol{r})\gamma_{z}^{h,m_{h}}(d\boldsymbol{r})dz\right.\\
 & \left.-\int_{t}^{s}\int_{\boldsymbol{U}}\boldsymbol{b}(z,\boldsymbol{X}(z),m,\boldsymbol{r})\gamma_{z}^{h,m_{h}}(d\boldsymbol{r})dz\right\Vert \rightarrow0
\end{aligned}
\end{equation*}
uniformly in $s$. Also, recall that $\gamma^{h,m_{h}}\rightarrow\gamma$
in the `compact weak' topology if and only if
\begin{equation*}
\begin{aligned} & \int_{t}^{s}\int_{\boldsymbol{U}}\phi(z,\boldsymbol{X}(z),m,\boldsymbol{r})\gamma_{z}^{h,m_{h}}(d\boldsymbol{r})dz\\
&\rightarrow \int_{t}^{s}\int_{\boldsymbol{U}}\phi(z,\boldsymbol{X}(z),m,\boldsymbol{r})\gamma_{z}(d\boldsymbol{r})dz,
\end{aligned}
\end{equation*}
for any continuous and bounded function $\phi(\cdot)$ with compact
support. Thus, the weak convergence and the Skorohod representation imply
that as $h\rightarrow0$,
\begin{equation*}
\begin{aligned}
 & \int_{t}^{s}\int_{\boldsymbol{U}}\boldsymbol{b}(z,\boldsymbol{X}(z),m,\boldsymbol{r})\gamma_{z}^{h,m_{h}}(d\boldsymbol{r})dz\\
&\rightarrow \int_{t}^{s}\int_{\boldsymbol{U}}\boldsymbol{b}(z,\boldsymbol{X}(z),m,\boldsymbol{r})\gamma_{z}(d\boldsymbol{r})dz
\end{aligned}
\end{equation*}
uniformly in $s$ on any bounded interval w.p.1. Then we have
\begin{equation}
\begin{aligned}
\boldsymbol{X}(s)= & \boldsymbol{x}+\int_{t}^{s}\int_{\boldsymbol{U}}\boldsymbol{b}(z,\boldsymbol{X}(z),m,\boldsymbol{r})\gamma_{z}(d\boldsymbol{r})dz \nonumber\\
 & +\int_{t}^{s}\boldsymbol{\sigma}(z,\boldsymbol{X}(z)) d\boldsymbol{W}(z)+\boldsymbol{\varepsilon}^{\delta}(s),\label{eq:5-2-2-1}
\end{aligned}
\end{equation}
where $\lim_{\delta\rightarrow0}\ep\Vert \boldsymbol{\varepsilon}^{\delta}(s)\Vert =0$.

Combining the tightness of $\{ \boldsymbol{X}^{h,m_{h}}\}$ and the fact that $\Phi^{h}(m_{h})=\mathbb{P}\circ(\hat{\boldsymbol{X}}^{h,m_{h}})^{-1}$
gives the relative compactness of $\{\Phi^{h}(m_{h})\}$
in $\mathcal{P}^{2}(\mathcal{Q})$. Suppose now that along a subsequence
(relabeled again as $\{h\}$)
\begin{equation*}
(\boldsymbol{X}^{h,m_{h}},\boldsymbol{W}^{h,m_{h}},\gamma^{h,m_{h}})\Rightarrow(\boldsymbol{X},\boldsymbol{W},\gamma),
\end{equation*}
where the notation $\Rightarrow$ denotes weak convergence, and
\begin{equation*}
\Phi^{h}(m_{h})\rightarrow m.
\end{equation*}
Then we also have that
\begin{equation*}
m_{h}\rightarrow m.
\end{equation*}
The proof is completed.
\end{proof}

\section{Proof of Theorem \ref{Theorem 6}}\label{Appendix 7.2}
\begin{proof}
We first show that $\lim\inf_{h\rightarrow0}v^{h,m_{h}}(t,\boldsymbol{x})\geq v(t,\boldsymbol{x})$.
Let $\hat{\gamma}^{h,m_{h}}$ be an optimal relaxed control for $\boldsymbol{X}^{h,m_{h}}$
for each $h$. That is,
\begin{equation*}
\begin{aligned}v^{h,m_{h}}(t,\boldsymbol{x}) & =J^{h,m_{h}}(t,\boldsymbol{x},\hat{\gamma}^{h,m_{h}})\\
 & =\inf_{\gamma^{h,m_{h}}\in\Gamma^{h}}J^{h,m_{h}}(t,\boldsymbol{x},\gamma^{h,m_{h}}).
\end{aligned}
\end{equation*}
Choose a subsequence $\{\tilde{h}\}$ of $\{h\}$ such that
\begin{equation*}
\begin{aligned}
\liminf_{h\rightarrow0}v^{h,m_{h}}\left(t,\boldsymbol{x}\right) & =\lim_{\tilde{h}}v^{\tilde{h},m_{\tilde{h}}}\left(t,\boldsymbol{x}\right)\\
 & =\lim_{\tilde{h}}J^{\tilde{h},m_{\tilde{h}}}(t,\boldsymbol{x},\hat{\gamma}^{\tilde{h},m_{\tilde{h}}}).
\end{aligned}
\end{equation*}
 Let $\{\boldsymbol{X}^{\tilde{h},m_{\tilde{h}}},\hat{\gamma}^{\tilde{h},m_{\tilde{h}}}\}$
weakly converge to $\{\boldsymbol{X},\gamma\}$. Otherwise, take a
subsequence of $\{\tilde{h}\}$ to assume its weak limit. By Theorem
\ref{Theorem 5}, Skorohod representation and dominated convergence theorem, we
have
\begin{equation*}	
J^{h,m_{h}}(t,\boldsymbol{x},\hat{\gamma}^{h,m_{h}})\rightarrow J(t,\boldsymbol{x},\gamma)\geq v(t,\boldsymbol{x}).
\end{equation*}
It follows that
\begin{equation*}
\liminf_{h\rightarrow0}v^{h,m_{h}}(t,\boldsymbol{x})\geq v(t,\boldsymbol{x}).
\end{equation*}
Next, we show that $\lim\sup_{h\rightarrow0}v^{h,m_{h}}(t,\boldsymbol{x})\leq v(t,\boldsymbol{x})$.

Given any $\rho>0$, there is a $\delta>0$ that we can approximate
$(\boldsymbol{X},\gamma)$ by $(\boldsymbol{X}^{\delta},\gamma^{\delta})$
satisfying
\begin{equation*}
\begin{aligned}\boldsymbol{X}^{\delta}(s)= & \boldsymbol{x}+\int_{t}^{s}\int_{\boldsymbol{U}}\boldsymbol{b}(z,\boldsymbol{X}^{\delta}(z),m_{\delta},\boldsymbol{r})\gamma_{z}^{\delta,m_{\delta}}(d\boldsymbol{r})dz\\
 & +\int_{t}^{s}\boldsymbol{\sigma}(z,\boldsymbol{X}^{\delta}(z)) d\boldsymbol{W}^{\delta},
\end{aligned}
\end{equation*}
where $\gamma_{z}^{\delta,m_{\delta}}$ is a piecewise constant and
takes finitely many values, and the controls are concentrated on the
points $r_{1},r_{2},\dots,r_{N}$, for all $s$. Let $\hat{\boldsymbol{\alpha}}^{\rho,m_{\rho}}$
be the optimal control and $\gamma^{\rho,m_{\rho}}$ be corresponding
relaxed controls representation, and let $\hat{\boldsymbol{X}}^{\rho,m_{\rho}}$
be the associated solution process. Since $\hat{\gamma}^{\rho,m_{\rho}}$
is optimal in the chosen class of controls, we have
\begin{equation}
J(t,\boldsymbol{x},\hat{\gamma}^{\rho,m_{\rho}})\leq v(t,\boldsymbol{x})+\frac{\rho}{3}.\label{eq:5-2-3-1}
\end{equation}
Note that for each given integer $\kappa$, there is a measurable
function $\varLambda_{\kappa}^{\rho}$ such that
\begin{equation*}
\hat{\boldsymbol{\alpha}}^{\rho,m_{\rho}}(s)=\varLambda_{\kappa}^{\rho}(\boldsymbol{W}_{l,\tilde{l}}(\tilde{s}),\tilde{s}\leq\kappa\delta,l,\tilde{l}\leq N),\;\text{on}\;[\kappa\delta,\kappa\delta+\delta).
\end{equation*}
We next approximate $\varLambda_{\kappa}^{\rho}$ by a function that
depends on the sample of $(\boldsymbol{W}_{l,\tilde{l}},l,\tilde{l}\leq N)$
at a finite number of time points. Let $\varsigma<\delta$ such that
$\delta/\varsigma$ is an integer. Because the $\sigma$-algebra determined
by $\{\boldsymbol{W}_{l,\tilde{l}}(\vartheta\varsigma),\vartheta\varsigma\leq\kappa\delta,l,\tilde{l}\leq N\}$
increases to the $\sigma$-algebra determined by $\{\boldsymbol{W}_{l,\tilde{l}}(\tilde{s}),\tilde{s}\leq\kappa\delta,l,\tilde{l}\leq N\} $,
the martingale convergence theorem implies that for each $\delta$
and $\kappa$, there are measurable functions $\varLambda_{\kappa}^{\rho,\varsigma}$,
such that as $\varsigma\rightarrow0$,
\begin{equation*}
\begin{aligned}\varLambda_{\kappa}^{\rho,\varsigma}(\boldsymbol{W}_{l,\tilde{l}}(\vartheta\varsigma),\vartheta\varsigma\leq\kappa\delta,l,\tilde{l}\leq N) & =\boldsymbol{\alpha}_{l}^{\rho,\varsigma}\rightarrow\hat{\boldsymbol{\alpha}}^{\rho,\varsigma}(\kappa\delta)\;\text{w.p.1}.\end{aligned}
\end{equation*}
Here, we select $\varLambda_{\kappa}^{\rho,\varsigma}$ such that
there are $N$ disjoint hyper-rectangles that cover the range of its
arguments and that $\varLambda_{\kappa}^{\rho,\varsigma}$ is constant
on each hyper-rectangle. Let $\gamma^{\rho,\varsigma,m_{\rho,\varsigma}}$
denote the relaxed controls representation of the ordinary control
$\boldsymbol{\alpha}^{\rho,\varsigma,m_{\rho,\varsigma}}$ which takes
values $\boldsymbol{\alpha}_{l}^{\rho,\varsigma,m_{\rho,\varsigma}}$
on $[\kappa\delta,\kappa\delta+\delta)$, and let $\boldsymbol{X}^{\rho,\varsigma,m_{\rho,\varsigma}}$
denote the associated solution. Then for small enough $\varsigma$,
we have
\begin{equation}
J(t,\boldsymbol{x},\gamma^{\rho,\varsigma,m_{\rho,\varsigma}})\leq J(t,\boldsymbol{x},\hat{\gamma}^{\rho,m_{\rho}})+\frac{\rho}{3}.\label{eq:5-2-3-2}
\end{equation}
Next, we adapt $\varLambda_{\kappa}^{\rho,\varsigma}$ such that it
can be applied to $\boldsymbol{X}_{n}^{h,m_{h}}$. Let controls $\bar{\boldsymbol{\alpha}}_{n}^{h,m_{h}}$
be used for the approximation chain $\boldsymbol{X}_{n}^{h,m_{h}}$.

For $\kappa=1,2,...$ and $n$ such that $nh_{2}\in[\kappa\delta,\kappa\delta+\delta)$,
we use the controls which are defined by $\bar{\boldsymbol{\alpha}}_{n}^{h,m_{h}}=\varLambda_{\kappa}^{\rho,\varsigma}(\boldsymbol{W}_{l,\tilde{l}}^{h,m_{h}}(\vartheta\varsigma),\vartheta\varsigma\leq\kappa\delta,l,\tilde{l}\leq N)$.
Recall that $\bar{\gamma}^{h,m_{h}}$ denotes the relaxed control representation
of the continuous interpolation of $\bar{\boldsymbol{\alpha}}_{n}^{h,m_{h}}$,
then
\begin{equation*}
\begin{aligned} & (\boldsymbol{X}^{h,m_{h}},\bar{\gamma}^{h,m_{h}},\boldsymbol{W}_{l,\tilde{l}}^{h,m_{h}},\\
 & \varLambda_{\kappa}^{\rho,\varsigma}(\boldsymbol{W}_{l,\tilde{l}}^{h,m_{h}}(\vartheta\varsigma),\vartheta\varsigma\leq\kappa\delta,l,\tilde{l}\leq N,\kappa=0,1,2,\ldots))\\
&\rightarrow (\boldsymbol{X}^{\rho,\varsigma,m_{\rho,\varsigma}},\bar{\gamma}^{\rho,\varsigma,m_{\rho,\varsigma}},\\
 & \boldsymbol{W}_{l,\tilde{l}},\varLambda_{\kappa}^{\rho,\varsigma}(\boldsymbol{W}_{l,\tilde{l}}(\vartheta\varsigma),\vartheta\varsigma\leq\kappa\delta,l,\tilde{l}\leq N,\kappa=0,1,2,\ldots)).
\end{aligned}
\end{equation*}
Thus
\begin{equation}
J(t,\boldsymbol{x},\bar{\gamma}^{h,m_{h}})\leq J(t,\boldsymbol{x},\gamma^{\rho,\varsigma,m_{\rho,\varsigma}})+\frac{\rho}{3}.\label{eq:5-2-3-3}
\end{equation}
Note that
\begin{equation*}
v^{h,m_{h}}(t,\boldsymbol{x})\leq J(t,\boldsymbol{x},\bar{\gamma}^{h,m_{h}}).
\end{equation*}
Combing the inequalities (\ref{eq:5-2-3-1}), (\ref{eq:5-2-3-2}) and
(\ref{eq:5-2-3-3}), we have $\lim\sup_{h\rightarrow0}v^{h,m_{h}}(t,\boldsymbol{x})\leq v(t,\boldsymbol{x})$
for the chosen subsequence. According to the tightness of $(\boldsymbol{X}^{h,m_{h}},\bar{\gamma}^{h,m_{h}})$
and arbitrary of $\rho$, we get
\begin{equation*}
\limsup_{h\rightarrow0}v^{h,m_{h}}(t,\boldsymbol{x})\leq v(t,\boldsymbol{x}),
\end{equation*}
and conclude the proof.
\end{proof}

\bibliographystyle{plain}
\bibliography{mybib}

\end{document}